\newtheorem{theorem}{Theorem}[section]
\newtheorem{lemma}[theorem]{Lemma}
\theoremstyle{definition}
\newtheorem{definition}[theorem]{Definition}
\newtheorem{example}[theorem]{Example}
\newtheorem{proposition}[theorem]{Proposition}
\newtheorem{corollary}[theorem]{Corollary}
\theoremstyle{remark}
\newtheorem{remark}[theorem]{Remark}
\begin{document}
\title{\textbf{ON FUZZY SEMIHYPERRINGS}}
\author{\textbf{Aqeel Ahmed, Muhammad Aslam}}
\address{Department of Mathematics Quaid-i-Azam University Islamabad 45320
Paksitan.}
\email{draslamqau@yahoo.com.}
\address{Department of Mathematics Quaid-i-Azam University Islamabad 45320
Paksitan}
\email{aqeel\_ahmed709@yahoo.com.}
\address{Department of Mathematics \& Computer Science, Faculty of Natural
Sciences\\
University of Gjirokastra\\
Albania}
\email{kostaq\_hila@yahoo.com, khila@uogj.edu.al}
\subjclass[2000]{20N20, 20N25, 08A72}
\keywords{Fuzzy semihyperrings; fuzzy subsemihypermodules; fuzzy prime
hyperideals; fuzzy irreducible hyperideals; fuzzy prime spectrum; fully
idempotent semihyperrings; von Neumann regular semihyperrings.}

\begin{abstract}
In this article we introduce the study of fuzzy semihyperrings and fuzzy $R$%
-semihypermodules, where $R$ is a semihyperrings and $R$-semihypermodules
are represntations of $R$. In particular, semihyperrings all of whose
hyperideals are idempotent, called fully idempotent semihyperrings, are
investigated in a fuzzy context. It is proved, among other results, that a
semihyperring $R$ is fully idempotent if and only if the lattics of fuzzy
hyperideals of $R$ is distributive under the sum and product of fuzzy
hyperideals. It is also shown that the set of proper fuzzy prime hyperideals
of a fully idempotent semihyperring $R$ admits the structure of a
topological space, called the fuzzy prime spectrum of $R$.
\end{abstract}

\maketitle

\section{\protect\huge Introduction}

The concept of hyperstructure was first introduced by Marty \cite{17} at the
eighth Congress of Scandinavian Mathematicians in 1934, when he defined
hypergroups and started to analyze their properties. Now, the theory of
algebraic hyperstructures has become a well- established branch in algebraic
theory and it has extensive applications in many branches of mathematics and
applied science. Later on, people have developed the semi-hypergroups, which
are the simplest algebraic hyperstructures having closure and associative
properties. A comprehensive review of the theory of hyperstructures can be
found in \cite{8,6,24}. The canonical hypergroups are a special type of
hypergroup which Mittas \cite{19}, is the first one that studied them
extensively. The theory of hypermodules which their additive structure is
just a canonical hypergroup, several authors have studied that, for example,
Massouros \cite{18}, Corsini \cite{7}, Davvaz \cite{10,9}, Zhan et al. \cite%
{28}, Ameri \cite{2} and Zahedi and Ameri \cite{27}\pagebreak

The theory of fuzzy sets, introduced by Zadeh \cite{26} in 1965, has
provided a useful mathematical tool for describing the behavior of systems
that are too complex or ill-defined to admit precise mathematical analysis
by classical methods and tools. In\cite{3,3a,4,11,12,22,30,31}, some
applications of this theory in algebraic structures and hyperstructures can
be seen.

In this paper, however, we pursue an algebraic approach to investigate the
concept of fuzzy semihyperring and related notion in order to set the ground
for future work. In section 1, we provide basic definitions and establish
some preliminary results. In Section 2, we investigate fully idempotent
semihyperrings, that is, semihyperrings all of whose hyperideals are
idempotent. It is proved that such semihyperrings are characterized by the
property that each proper fuzzy hyperideal is the intersection of fuzzy
prime hyperideals containing it. In section 3. we construct the fuzzy prime
spectrum of fully idempotent semihyperrings in a manner analogous to the
construction of the prime spectrum in classical semiring theory.

\section{\protect\huge Preliminaries}

A hyperstructure is a non-empty set say $H$ together with a mapping "$\circ $%
": $H\times H\rightarrow P^{\ast }(H),$ where $P^{\ast }(H)$ is the set of
all the non-empty subsets of $H$ and "$\circ $" is called hyperoperation. If 
$x\in H$ and $A,B\in P^{\ast }(H),$ then by $A\circ B,$ $A\circ x,$ and $%
x\circ B,$ we mean $A\circ B=\cup \{a\circ b:a\in A$ and $b\in B\},$ $A\circ
x=A\circ \{x\}$ and $x\circ B=\{x\}\circ B,$ respectively \cite{8,6}. A
hypergroupoid is a set $H$ with a binary hyperoperation $\circ $ and a
commutative hypergroupoid $(H,\circ ),$ which is associative, that is $%
x\circ (y\circ z)=(x\circ y)\circ z,$ for all $x,y,z\in H$ is called a
semihypergroup. A hypergroup is a semihypergroup, such that, for all $x\in
H, $ we have $x\circ H=H=H\circ x,$ which is called reproduction axiom. If $%
H $ is a hypergroup and $K$ is a nonempty subset of $H.$ Then $K$ is a
subhypergroup of $H$ if $K$ itself a hypergroup under hyperoperation,
defined in $H.$ Hence it is clear that a subset $K$ of $H$ is a
subhypergroup if and only if $aK=Ka=K,$ under the hyperoperation on $H.$

A set $H$ togather a hyperoperation $\circ $ is called a polygroup, if the
following conditions are satisfied:

$(1)$ $x\circ (y\circ z)=(x\circ y)\circ z,$ for all $x,y,z\in H.$

$(2)$ There exist a unique element, $e\in H$ such that $e\circ x=x\circ e=x,$
for all $x\in H.$

$(3)$ For all $x\in H,$ there exist a unique element, say $x^{\prime }\in H$
such that $e\in x\circ x^{\prime }\cap x^{\prime }\circ x$ (where $x^{\prime
}=x^{-1}$)$.$

$(4)$ For all $x,y,z\in H,$ $z\in x\circ y\Rightarrow x\in z\circ
y^{-1}\Rightarrow y\in x^{-1}\circ z.$

A non-empty subset $K$ of a polygroup $(H,\circ )$ is called a subpolygroup
if $(K,\circ )$ is itself a polygroup. In this case we write $K<_{p}H.$

A commutative polygroup is called canonical hypergroup.

\begin{definition}
A semihyperring is an algebraic hypersystem ($R$,$\oplus $, $\cdot $)
consisting of a non-empty set $R$ together with one hyperoperation "$\oplus $%
" and one binary operation "$\cdot "$ on $R$, such that ($R$,$\oplus $) is a
commutative semihypergroup and ($R$,.) is a semigroup. For all $x,y,z\in R$,
the binary operation of multiplication is distributive over hyperoperation
from both sides that is, $x.(y\oplus z)=x.y\oplus x.z$ and $(x\oplus
y).z=x.z\oplus y.z$. An element $0\in R,$ is an absorbing element, such that 
$0\oplus x=x=x\oplus 0,$ and $0.x=0=x.0$ for all $x\in R$ \cite{13}$.$ By a
subsemihyperring of\ $R$, we mean a non-empty subset $S$ of $R$ such that
for all $x,y\in S,$ we have $x.y\in S$ and $x\oplus y\subseteq S.$
\end{definition}

A semihyperring ($R$,$\oplus $,$\cdot $) is called a hyperring if ($R$,$%
\oplus $) is a canonical hypergroup and ($R$,.) is a semigroup.

\begin{example}
Let $X$ be a non-empty finite set and $\tau $ is a topology on $X$. We
define the hyperoperation of the addition and the multiplication on $\tau $
as;
\end{example}

For any $A,B\in \tau ,$ $A\oplus B=A\cup B$ and $A.B=A\cap B.$ Then $(\tau
,\oplus ,.)$ is a semihyperring with absorbing element and additive identity 
$\Phi $ and multiplicative identity $X.$

\begin{example}
Let us consider a set $S=\left\{ \left[ 
\begin{array}{cc}
a & b \\ 
c & d%
\end{array}%
\right] ;a,b,c,d\in W\right\} $. Where $W$ is a set of whole numbers. We
define the hyperoperation of addition and multiplication as;

For $A=\left[ 
\begin{array}{cc}
a & b \\ 
c & d%
\end{array}%
\right] $ and $B=\left[ 
\begin{array}{cc}
a^{^{\prime }} & b^{^{\prime }} \\ 
c^{^{\prime }} & d^{^{\prime }}%
\end{array}%
\right] $ be taken from $S.$

$A\oplus B=\left[ 
\begin{array}{cc}
a & b \\ 
c & d%
\end{array}%
\right] \oplus \left[ 
\begin{array}{cc}
a^{^{\prime }} & b^{^{\prime }} \\ 
c^{^{\prime }} & d^{^{\prime }}%
\end{array}%
\right] =\left\{ \left[ 
\begin{array}{cc}
a+a^{^{\prime }} & b+b^{^{\prime }} \\ 
c+c^{^{\prime }} & d+d^{^{\prime }}%
\end{array}%
\right] \right\} \subseteq S$ and

$A.B=\left[ 
\begin{array}{cc}
a & b \\ 
c & d%
\end{array}%
\right] .\left[ 
\begin{array}{cc}
a^{^{\prime }} & b^{^{\prime }} \\ 
c^{^{\prime }} & d^{^{\prime }}%
\end{array}%
\right] =\left[ 
\begin{array}{cc}
aa^{^{\prime }}+bc^{^{\prime }} & ab^{^{\prime }}+bd^{^{\prime }} \\ 
ca^{^{\prime }}+dc^{^{\prime }} & cb^{^{\prime }}+dd^{^{\prime }}%
\end{array}%
\right] \in S$

Then $(S,\oplus ,.)$ is a semihyperring with hyperidentity as a null matrix
and multiplicative identity as a identity matrix.
\end{example}

\begin{example}
\label{example1}On four element semihyperring $(R,\oplus ,.)$ defined by the
following two tables:%
\begin{equation*}
\begin{tabular}{l|llll}
$\oplus $ & $0$ & $a$ & $b$ & $c$ \\ \hline
$0$ & $\{0\}$ & $\{0\}$ & $\{0\}$ & $\{0\}$ \\ 
$a$ & $\{0\}$ & $\{a,b\}$ & $\{b\}$ & $\{c\}$ \\ 
$b$ & $\{0\}$ & $\{b\}$ & $\{0,b\}$ & $\{c\}$ \\ 
$c$ & $\{0\}$ & $\{c\}$ & $\{c\}$ & $\{0,c\}$%
\end{tabular}%
\ \ \ 
\begin{tabular}{l|llll}
$\mathbf{.}$ & $0$ & $a$ & $b$ & $c$ \\ \hline
$0$ & $0$ & $0$ & $0$ & $0$ \\ 
$a$ & $0$ & $a$ & $a$ & $a$ \\ 
$b$ & $0$ & $b$ & $b$ & $b$ \\ 
$c$ & $0$ & $c$ & $c$ & $c$%
\end{tabular}%
\end{equation*}
\end{example}

By routine calculation $\{0\},$ $\{0,b\},$ $\{0,c\},$ $\{0,a,b\},$ $%
\{0,b,c\},$ $R,$ are subsemihyperrings of $R.$

\begin{definition}
By a left (right) hyperideal of $R,$ we mean a subsemihyperring $I$ of $R$
such that for all $r\in R$ and $x\in I,$ we have $r.x\in I$ $(x.r\in I)$.
\end{definition}

By a hyperideal , we mean a subsemihyperring of $R$ which is both a left and
a right hyperideal of $R,$ A hyperideal generated by a non- empty subset $A$
of a semihyperring $R$, will be denoted by $\langle A\rangle $, which is
intersection of all hyperideals of $R,$ which contains $A.$ If $I$ and $J$
be two hyperideals of a semihyperring $R,$ then the sum and product of two
hyperideals are also a hyperideal and defined as respectively: 
\begin{equation*}
I\oplus J=\underset{\QATOP{a_{i}\in I}{b_{i}\in J}}{\cup }(a_{i}\oplus b_{j})
\end{equation*}%
and%
\begin{equation*}
IJ=\{\underset{finite}{\Sigma }a_{i}b_{j};a_{i}\in I,b_{j}\in J\text{ }\}
\end{equation*}

\begin{example}
On four element semihyperring $(R,\oplus ,.)$ defined by the example \ref%
{example1}.
\end{example}

These all $\{0,b\},$ $\{0,c\},$ $\{0,a,b\},$ $\{0,b,c\},$ are right
hyperideals of semihyperring $R.$

\begin{definition}
A non-empty set $M,$ which is commutative semihypergroup with respect to
addition, with an absorbing element $0$ is called a right, $R$%
-semihypermodule $M_{R}$, if $R$ is a semihyperring and there is a function $%
\alpha :M\times R\longrightarrow P^{\ast }(M),$ where $P^{\ast
}(M)=P(M)\diagdown \{0\},$ such that if $\alpha (m,x)$ is denoted by $mx$
and $mx\subseteq M$, for all $x\in R$ and $m\in M.$ Then the following
conditions hold, for all $x,y\in R$ and $m_{1},m_{2},m\in M:$
\end{definition}

$(i)$ $(m_{1}\oplus m_{2})x=m_{1}x\oplus m_{2}x$

$(ii)$ $m(x\oplus y)=mx\oplus my$

$(iii)$ $m(xy)=(mx)y$

$(iv)$ $0.x=m.0=0.$

Similarly, we can define a left $R$-semihypermodules $_{R}M.$ A
semihyperring $R$ is a right semihypermodules over itself which will be
denoted by $R_{R}$. A non-empty subset $N$ of a right $R$-semihypermodule $M$
is called a subsemihypermodule of $M,$ if $(N,\oplus )$ is a
subsemihypergroup of $(M,\oplus )$ and $RN\subseteq P^{\ast }(N).$ Also note
that, the right (left) subsemihypermodules $R_{R}$ ($_{R}R$) are right
(left) hyperideals of $R.$

\begin{definition}
Every hyperideal of a semihyperring $R$ is a semihypermodule of $R$.
\end{definition}

Therefore by example \ref{example1} of semihyperring $R$, $\{0,b\},$ $%
\{0,c\},$ $\{0,a,b\}$ and $\{0,b,c\},$ are right $R$-semihypermodules of
semihyperring $R.$

\begin{definition}
If $X$ is a universe and $A\subseteq X,$ then characteristic function of $A$
is a function $\chi _{A}:X\rightarrow \{0,1\},$ defined by 
\begin{equation*}
\chi _{A}(x)=\QDATOPD\{ . {1\text{\ \ \ \ \ \ \ \ \ \ if }x\in A}{0\text{ \
\ \ \ \ \ \ \ if\ }x\notin A}
\end{equation*}%
A fuzzy subset $\lambda $ of $X$ is a function $\lambda :X\rightarrow \left[
0,1\right] .$ We write $\lambda (x)\in \left[ 0,1\right] ,$ for all $x\in X,$
where $\lambda $ is fuzzy subset of $X$ such that for each $x\in X$, $0\leq
\lambda (x)\leq 1.$ For any two fuzzy subsets $\lambda $ and $\mu $ of $X$, $%
\lambda \leq \mu $ if and only if $\lambda (x)\leq \mu (x),$ for all $x\in X$%
. The symbols $\lambda \wedge \mu $, and $\lambda \vee \mu $ will mean the
following fuzzy subsets of $X,$ for all $x\in X$.
\end{definition}

\begin{center}
\begin{eqnarray*}
(\lambda \wedge \mu )(x) &=&\min \{\lambda (x),\mu (x)\} \\
(\lambda \vee \mu )(x) &=&\max \{\lambda (x),\mu (x)\}
\end{eqnarray*}
\end{center}

More generally, if $\{\lambda _{i}:i\in \Lambda \}$ is a family of fuzzy
subsets of $X$, then $\wedge _{i\in \Lambda }\lambda _{i}$ and $\vee _{i\in
\Lambda }\lambda _{i}$ are defined by

\begin{center}
\begin{eqnarray*}
(\wedge _{i\in \Lambda }\lambda _{i})(x) &=&\underset{_{i\in \Lambda }}{\min 
}(\lambda _{i}(x) \\
(\vee _{i\in \Lambda }\lambda _{i})(x) &=&\underset{_{i\in \Lambda }}{\max }%
(\lambda _{i}(x)
\end{eqnarray*}
\end{center}

and will called the intersection and union of the family $\{\lambda
_{i}:i\in \Lambda \}$ of fuzzy subsets of $X.$

Let $\lambda $ be a fuzzy subset of $X$ and $t\in (0,1].$ Then the set $%
U_{t}^{\lambda }=\{x\in X:\lambda (x)\geq t\}$ is called the level subset of 
$X.$

\begin{definition}
Let $R$ be a semihyperring and $\mu $ a fuzzy set in $R.$\ Then, $\mu $ is
said to be a fuzzy hyperideal of $R$ if for all $r,x,y\in R$ the following
axioms hold:
\end{definition}

(i) $\underset{z\in x\oplus y}{\inf }\mu (z)\geq \mu (x)\wedge \mu (y),$ for
all $x,y\in R.$

(ii) $\mu (xy)\geq \mu (x)$ and $\mu (yx)\geq \mu (x)$ for all $x,y\in R.$

\begin{theorem}
Let $\mu $ be a fuzzy set in \ a semihyperring $R.$ Then, $\mu $ is a fuzzy
hyperideal of $R$ if and only if for every $t\in (0,1],$ the level subset 
\begin{equation*}
\mu _{t}=\{x\in R|\mu (x)\geq t\}\neq \varphi
\end{equation*}
\end{theorem}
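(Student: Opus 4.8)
The plan is to establish the standard level-set criterion: $\mu$ is a fuzzy hyperideal of $R$ if and only if, for every $t\in(0,1]$, the level subset $\mu_t=\{x\in R:\mu(x)\ge t\}$ is a hyperideal of $R$ whenever it is non-empty. I would treat the two implications separately, in each direction matching the two defining axioms of a fuzzy hyperideal — (i) the $\oplus$-condition and (ii) the multiplicative condition — against closure of $\mu_t$ under $\oplus$ and absorption of products from $R$, respectively.

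For the ``only if'' part, I would assume $\mu$ is a fuzzy hyperideal, fix $t\in(0,1]$ with $\mu_t\neq\varphi$, and take $x,y\in\mu_t$. Axiom (i) gives $\inf_{z\in x\oplus y}\mu(z)\ge\mu(x)\wedge\mu(y)\ge t$, so $\mu(z)\ge t$ for every $z\in x\oplus y$, i.e. $x\oplus y\subseteq\mu_t$. For $x\in\mu_t$ and any $r\in R$, axiom (ii) gives $\mu(xr)\ge\mu(x)\ge t$ and $\mu(rx)\ge\mu(x)\ge t$, so $xr,rx\in\mu_t$; specialising to $r\in\mu_t$ yields closure under multiplication, and the general case shows $\mu_t$ absorbs products from $R$ on both sides. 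Hence $\mu_t$ is a subsemihyperring that is a two-sided hyperideal.

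For the ``if'' part, I would assume every non-empty level subset $\mu_t$ ($t\in(0,1]$) is a hyperideal and verify (i) and (ii) pointwise. For (ii), fix $x,y\in R$ and put $t=\mu(x)$: if $t=0$ the inequalities $\mu(xy)\ge\mu(x)$ and $\mu(yx)\ge\mu(x)$ are trivial, while if $t>0$ then $x\in\mu_t\neq\varphi$, which is therefore a hyperideal, so $xy,yx\in\mu_t$, i.e. $\mu(xy)\ge t=\mu(x)$ and $\mu(yx)\ge\mu(x)$. For (i), fix $x,y\in R$ and put $t=\mu(x)\wedge\mu(y)$: the case $t=0$ is again trivial, and for $t>0$ both $x,y\in\mu_t$, so $x\oplus y\subseteq\mu_t$ and thus $\mu(z)\ge t$ for all $z\in x\oplus y$, giving $\inf_{z\in x\oplus y}\mu(z)\ge t=\mu(x)\wedge\mu(y)$.

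I do not anticipate a genuine obstacle: the argument is a routine transfer between a pointwise inequality and its level-set reformulation. The two points that require a little attention are, first, reading the theorem's non-emptiness clause correctly — it should be understood as ``$\mu_t$ is a hyperideal of $R$ whenever $\mu_t\neq\varphi$'' — and second, isolating the degenerate values $\mu(x)=0$ and $\mu(x)\wedge\mu(y)=0$ in the converse, which cannot be captured by a level parameter $t\in(0,1]$ and must be handled by the trivial bound $\mu(\cdot)\ge 0$. It is also worth noting that, since $\mu_t$ is defined with the non-strict inequality $\mu(x)\ge t$, no subtlety about attainment of infima arises when passing between the two formulations.
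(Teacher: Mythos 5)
Your proof is correct and is exactly the ``straightforward'' argument the paper alludes to but does not write out: it simply declares the result to follow from the definition, whereas you supply the routine level-set translation in both directions, including the correct reading of the garbled statement and the degenerate cases $\mu(x)=0$ and $\mu(x)\wedge\mu(y)=0$. Nothing further is needed.
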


is an hyperideal of $R.$

\begin{proof}
The proof is straight forward by considering the definition.
\end{proof}

\begin{definition}
Let $M$ be a right (left) $R$-semihypermodule. A function $\mu :M\rightarrow
\lbrack 0,1]$, is called a fuzzy subsemihypermodule of $M_{R}$ ($_{R}M$)$,$
if the following conditions hold for all $m_{1},m_{2},m\in M$:
\end{definition}

(i) $\mu (0_{M})=1$

(ii)$\underset{m^{\prime }\in m_{1}\oplus m_{2}}{\inf }\mu (m^{\prime })\geq
\mu (m_{1})\wedge \mu (m_{2}),$ for all $m_{1},m_{2}\in M,$

(iii) $\mu (mr)\geq \mu (m),$ $(\mu (rm)\geq \mu (m)),$ for all $r\in R$ and 
$m\in M.$

Also note that, fuzzy subsemihypermodules of $R_{R}$ ($_{R}R$) are called
fuzzy hyperideals of $R.$

Generalizing the notion of a fuzzy hypermodule \cite{3,12,13}, we formulate
the following definition.

\begin{definition}
Let $\lambda $ be a fuzzy subsemihypermodule of a right semihypermodule $%
M_{R}$ and $\mu $ a fuzzy hyperideal of $R.$
\end{definition}

Then the fuzzy subset $\lambda \mu $ of $M$ is defined by

\begin{equation*}
(\lambda \mu )(x)=\underset{x\in \Sigma _{i=1}^{p}y_{i}z_{i}}{\vee }[%
\underset{1\leq i\leq p}{\wedge }[\lambda (y_{i})\wedge \mu (z_{i})]]
\end{equation*}

where $x\in M,$ $y_{i}\in M,$ $z_{i}\in R$ and $p\in N.$

\begin{proposition}
If $\lambda $ is a fuzzy subsemihypermodule of $M_{R}$ and $\mu $ a fuzzy
hyperideal of $R.$
\end{proposition}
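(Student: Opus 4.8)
The statement is evidently meant to conclude that $\lambda\mu$ is again a fuzzy subsemihypermodule of $M_{R}$, so the plan is to check the three conditions in the definition of a fuzzy subsemihypermodule: $(\lambda\mu)(0_{M})=1$, the $\inf$-condition over $m_{1}\oplus m_{2}$, and stability under the right $R$-action. Two elementary facts will be used repeatedly. First, for families $(a_{s})_{s\in S}$ and $(b_{u})_{u\in U}$ of numbers in $[0,1]$ one has $\bigvee_{(s,u)\in S\times U}(a_{s}\wedge b_{u})=(\bigvee_{s}a_{s})\wedge(\bigvee_{u}b_{u})$, which lets me collapse a supremum taken over two independent representations. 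Second, by a routine finite induction from the semihypermodule axioms $(i)$, $(iii)$ and from associativity and commutativity of $\oplus$, the hypersum $(\bigoplus_{i=1}^{p}y_{i}z_{i})\oplus(\bigoplus_{j=1}^{q}y_{j}'z_{j}')$ coincides with the single $(p+q)$-term hypersum $y_{1}z_{1}\oplus\cdots\oplus y_{p}z_{p}\oplus y_{1}'z_{1}'\oplus\cdots\oplus y_{q}'z_{q}'$, while $(\bigoplus_{i=1}^{p}y_{i}z_{i})\,r=\bigoplus_{i=1}^{p}y_{i}(z_{i}r)$ for every $r\in R$. For the normalization itself: since $m\cdot 0_{R}=0_{M}$ in $M_{R}$, the one-term expression $0_{M}\in 0_{M}\cdot 0_{R}$ is one of the representations allowed in the definition of $\lambda\mu$, so $(\lambda\mu)(0_{M})\geq\lambda(0_{M})\wedge\mu(0_{R})=1$, using $\lambda(0_{M})=1$ and $\mu(0_{R})=1$; hence $(\lambda\mu)(0_{M})=1$.

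For the $\inf$-condition, fix $m_{1},m_{2}\in M$ and $m'\in m_{1}\oplus m_{2}$, and choose representations $m_{1}\in\bigoplus_{i=1}^{p}y_{i}z_{i}$ and $m_{2}\in\bigoplus_{j=1}^{q}y_{j}'z_{j}'$. By monotonicity of the hyperoperation together with the second fact above, $m'\in(\bigoplus_{i}y_{i}z_{i})\oplus(\bigoplus_{j}y_{j}'z_{j}')$, which is exactly the $(p+q)$-term hypersum; reading this off as a representation of $m'$ gives $(\lambda\mu)(m')\geq(\bigwedge_{i}[\lambda(y_{i})\wedge\mu(z_{i})])\wedge(\bigwedge_{j}[\lambda(y_{j}')\wedge\mu(z_{j}')])$. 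Taking the supremum over all representations of $m_{1}$ and, independently, over all representations of $m_{2}$, and applying the sup-meet identity, yields $(\lambda\mu)(m')\geq(\lambda\mu)(m_{1})\wedge(\lambda\mu)(m_{2})$; since $m'$ was an arbitrary element of $m_{1}\oplus m_{2}$, it follows that $\inf_{m'\in m_{1}\oplus m_{2}}(\lambda\mu)(m')\geq(\lambda\mu)(m_{1})\wedge(\lambda\mu)(m_{2})$.

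For stability under the action, fix $m\in M$, $r\in R$, an element $t\in mr$, and a representation $m\in\bigoplus_{i=1}^{p}y_{i}z_{i}$. Then $t\in(\bigoplus_{i}y_{i}z_{i})\,r=\bigoplus_{i}y_{i}(z_{i}r)$ by the second fact, which is an admissible representation of $t$ with scalars $z_{i}r\in R$; hence $(\lambda\mu)(t)\geq\bigwedge_{i}[\lambda(y_{i})\wedge\mu(z_{i}r)]\geq\bigwedge_{i}[\lambda(y_{i})\wedge\mu(z_{i})]$, the last inequality because $\mu(z_{i}r)\geq\mu(z_{i})$ by the fuzzy hyperideal property of $\mu$. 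Taking the supremum over all representations of $m$ gives $(\lambda\mu)(t)\geq(\lambda\mu)(m)$ for every $t\in mr$, which is condition $(iii)$. (If the intended statement also records $\lambda\mu\leq\lambda$, it follows the same way: any $t$ in a hypersum $\bigoplus_{i}y_{i}z_{i}$ lies in $t_{1}\oplus\cdots\oplus t_{p}$ with $t_{i}\in y_{i}z_{i}$, so $\lambda(t)\geq\bigwedge_{i}\lambda(t_{i})\geq\bigwedge_{i}\lambda(y_{i})\geq\bigwedge_{i}[\lambda(y_{i})\wedge\mu(z_{i})]$, and one takes the supremum over representations.)

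I do not anticipate a genuinely deep step here: the whole proof is bookkeeping inside the hyperstructure. The two places that need care are the interchange of the supremum over representations with the meet in the $\inf$-condition — this is precisely where the sup-meet identity is invoked — and keeping the distinction between membership and equality whenever a single element is combined with a hypersum. To keep this clean I would first state and prove, as a short preliminary lemma, the two structural identities used above (distributivity of $\oplus$ over finite hypersums, and compatibility of the $R$-action with finite hypersums), and then quote that lemma in the verifications of the $\inf$-condition and of the action condition.
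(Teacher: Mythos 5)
Your proposal is correct and follows essentially the same route as the paper's proof: it verifies the three defining conditions of a fuzzy subsemihypermodule, concatenates representations of $m_{1}$ and $m_{2}$ to handle the $\inf$-condition via the sup--meet (infinite meet distributive) identity, and uses $\mu(z_{i}r)\geq\mu(z_{i})$ together with $(\bigoplus_{i}y_{i}z_{i})r=\bigoplus_{i}y_{i}(z_{i}r)$ for the action condition. Your version is in fact somewhat more careful than the paper's about the distinction between membership in a hypersum and equality, but the underlying argument is the same.
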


Then the fuzzy subset $\lambda \mu $ is a fuzzy subsemihypermodule of $M$.

\begin{proof}
We have

$(i)(\lambda \mu )(0_{M})=\underset{0\in \Sigma _{i=1}^{p}y_{i}z_{i}}{\vee }[%
\underset{1\leq i\leq p}{\wedge }[\lambda (y_{i})\wedge \mu (z_{i})]]\geq
\lambda (0_{M})\wedge \mu (0)=1.$

Thus $(\lambda \mu )(0_{M})=1.$

For $(ii)$

$(\lambda \mu )(m)=\underset{m\in \Sigma _{j=1}^{q}y_{j}^{\prime
}z_{j}^{\prime }}{\vee }[\underset{1\leq j\leq q}{\wedge }[\lambda
(y_{j}^{\prime })\wedge \mu (z_{j}^{\prime })]],$

and`

$(\lambda \mu )(m^{\prime })=\underset{m^{\prime }\in \Sigma
_{k=1}^{r}y_{k}^{^{\prime \prime }}z_{k}^{^{\prime \prime }}}{\vee }[%
\underset{1\leq k\leq r}{\wedge }[\lambda (y_{k}^{^{\prime \prime }})\wedge
\mu (z_{k}^{^{\prime \prime }})]],$ where $m,m^{\prime }\in M.$

Thus

$(\lambda \mu )(m)\wedge (\lambda \mu )(m^{\prime })$

$=[\underset{m\in \Sigma _{j=1}^{q}y_{j}^{\prime }z_{j}^{\prime }}{\vee }[%
\underset{1\leq j\leq q}{\wedge }[\lambda (y_{j}^{\prime })\wedge \mu
(z_{j}^{\prime })]]]\wedge \lbrack \underset{m^{\prime }\in \Sigma
_{k=1}^{r}y_{k}^{^{\prime \prime }}z_{k}^{^{\prime \prime }}}{\vee }[%
\underset{1\leq k\leq r}{\wedge }[\lambda (y_{k}^{^{\prime \prime }})\wedge
\mu (z_{k}^{^{\prime \prime }})]]]$

\ \ \ \ \ \ \ \ \ \ \ \ \ \ \ \ \ \ \ \ \ \ \ \ \ \ \ \ \ \ \ \ \ (using the
infinite meet distributive law)

$=\underset{m\in \Sigma _{j=1}^{q}y_{j}^{\prime }z_{j}^{\prime }}{\vee }$ $%
\underset{m^{\prime }\in \Sigma _{k=1}^{r}y_{k}^{^{\prime \prime
}}z_{k}^{^{\prime \prime }}}{\vee }[\underset{1\leq j\leq q}{\wedge }%
(\lambda (y_{j}^{\prime })\wedge \mu (z_{j}^{\prime }))]\wedge \lbrack 
\underset{1\leq k\leq r}{\wedge }(\lambda (y_{k}^{^{\prime \prime }})\wedge
\mu (z_{k}^{^{\prime \prime }}))]$

$\leq \underset{m\oplus m^{\prime }\subseteq \Sigma
_{l=1}^{s}y_{l}^{^{\prime \prime \prime }}z_{l}^{^{\prime \prime \prime }}}{%
\vee }[\underset{1\leq l\leq s}{\wedge }(\lambda (y_{l}^{^{\prime \prime
\prime }})\wedge \mu (z_{l}^{^{^{\prime \prime \prime }}}))]=\underset{m"\in
m\oplus m^{\prime }}{\inf }\lambda \mu (m^{\prime \prime })$

and for $(iii)$

$(\lambda \mu )(m)=\underset{m\in \Sigma _{j=1}^{q}y_{j}^{\prime
}z_{j}^{\prime }}{\vee }[\underset{1\leq j\leq q}{\wedge }[\lambda
(y_{j}^{\prime })\wedge \mu (z_{j}^{\prime })]],$

$\leq \underset{m\in \Sigma _{j=1}^{q}y_{j}^{\prime }z_{j}^{\prime }}{\vee }[%
\underset{1\leq j\leq q}{\wedge }[\lambda (y_{j}^{\prime })\wedge \mu
(z_{j}^{\prime }a)]],$ where $a$ is any element of $R.$

$\leq \underset{ma\subseteq \Sigma _{j=1}^{t}y_{j}^{^{\prime \prime
}}z_{j}^{^{\prime \prime }}}{\vee }[\underset{1\leq n\leq t}{\wedge }%
[\lambda (y_{n}^{^{\prime \prime }})\wedge \mu (z_{n}^{^{^{\prime \prime
}}})]]=\lambda \mu (ma).$
\end{proof}

\begin{corollary}
If $\lambda $ and $\mu $ are fuzzy hyperideals of $R,$ then $\lambda \mu $
is a fuzzy hyperideal of $R,$ called the product of $\lambda $ and $\mu .$
\end{corollary}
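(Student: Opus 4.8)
The plan is to derive the corollary immediately from the Proposition, at the cost of one extra short verification. The key remark is that a fuzzy hyperideal of $R$ is, by definition, nothing but a fuzzy subsemihypermodule of the right $R$-semihypermodule $R_R$ (and symmetrically of $_RR$). So I would first take $M_R=R_R$, regard the given fuzzy hyperideal $\lambda$ as a fuzzy subsemihypermodule of $R_R$, and keep $\mu$ as the given fuzzy hyperideal of $R$. The Proposition then applies verbatim and tells us that $\lambda\mu$ is a fuzzy subsemihypermodule of $R_R$. Specializing its three axioms to this case, we obtain $(\lambda\mu)(0)=1$, the inf-condition $\underset{z\in x\oplus y}{\inf}(\lambda\mu)(z)\ge(\lambda\mu)(x)\wedge(\lambda\mu)(y)$, and the right-absorption inequality $(\lambda\mu)(xy)\ge(\lambda\mu)(x)$ for all $x,y\in R$.

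What remains, and the only thing not handed to us by the Proposition, is the left-absorption inequality $(\lambda\mu)(yx)\ge(\lambda\mu)(x)$; this is where I must use that $\lambda$ is a two-sided --- in particular a left --- fuzzy hyperideal. To prove it, I would fix $a\in R$ and an arbitrary representation $x\in\sum_{i=1}^{p}y_iz_i$ with $y_i,z_i\in R$. Using the left distributive law $a\cdot(u\oplus v)=a\cdot u\oplus a\cdot v$ repeatedly together with associativity of $\cdot$, one gets $ax\in\sum_{i=1}^{p}(ay_i)z_i$, which is an admissible representation of $ax$ in the definition of $\lambda\mu$. Hence
\[
(\lambda\mu)(ax)\ \ge\ \underset{1\le i\le p}{\wedge}\bigl(\lambda(ay_i)\wedge\mu(z_i)\bigr)\ \ge\ \underset{1\le i\le p}{\wedge}\bigl(\lambda(y_i)\wedge\mu(z_i)\bigr),
\]
the last step using $\lambda(ay_i)\ge\lambda(y_i)$, valid because $\lambda$ is a left fuzzy hyperideal. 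Taking the supremum over all representations of $x$ yields $(\lambda\mu)(ax)\ge(\lambda\mu)(x)$.

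Together these two steps show that $\lambda\mu$ satisfies every axiom defining a fuzzy hyperideal of $R$, finishing the proof. I do not expect a serious obstacle: essentially all of the work is already contained in the Proposition, and the added left-absorption argument mirrors the right-absorption argument given there. The one point deserving a little care is the set-membership $ax\in\sum_{i=1}^{p}(ay_i)z_i$, which should be justified by an easy induction on $p$ from the distributive law and the identity $a\cdot(y_iz_i)=(ay_i)z_i$.
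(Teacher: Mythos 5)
Your proof is correct and follows the route the paper intends: the corollary is stated without proof as an immediate consequence of the Proposition, obtained by viewing a fuzzy hyperideal of $R$ as a fuzzy subsemihypermodule of $R_{R}$. Your additional verification of the left-absorption inequality $(\lambda\mu)(ax)\geq(\lambda\mu)(x)$, via the admissible representation $ax\in\sum_{i=1}^{p}(ay_{i})z_{i}$ and $\lambda(ay_{i})\geq\lambda(y_{i})$, is exactly the step the paper leaves implicit (the Proposition only yields the right-sided condition), and it is carried out correctly.
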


\begin{remark}
If $\lambda $ and $\mu $ are fuzzy hyperideals of $R,$ then $\lambda \wedge
\mu $ is clearly a fuzzy hyperideal of $R.$ In general, $\lambda \wedge \mu
\neq \lambda \mu .$
\end{remark}

\begin{definition}
If $\lambda $ and $\mu $ are fuzzy hyperideals of $R.$ The fuzzy subset $%
\lambda \oplus \mu $ of $R$ is defined by
\end{definition}

$(\lambda \oplus \mu )(x)=\underset{x\in y\oplus z}{\vee }[\lambda (y)\wedge
\mu (z)],$ for $x\in R.$

\begin{proposition}
For fuzzy hyperideals $\lambda $ and $\mu $ of $R,$ $\lambda \oplus \mu $ is
a fuzzy hyperideal of $R$ (called the sum of $\lambda $ and $\mu $).
\end{proposition}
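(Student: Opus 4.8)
The plan is to check the three conditions defining a fuzzy (sub)hyperideal for $\lambda\oplus\mu$ — the value $1$ at $0$, the hypersum inequality, and absorption under multiplication — working throughout with suprema rather than trying to exhibit a single optimal decomposition. The value at $0$ is immediate: since $0\in 0\oplus 0$ and $\lambda(0)=\mu(0)=1$, the defining supremum gives $(\lambda\oplus\mu)(0)\geq\lambda(0)\wedge\mu(0)=1$; the same remark ($x\in 0\oplus x$ via the absorbing element) also shows the supremum defining $(\lambda\oplus\mu)(x)$ is never over the empty set.

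For absorption, I would start from an arbitrary $x\in R$, an arbitrary decomposition $x\in a\oplus b$, and an arbitrary $r\in R$. Since multiplication by the single element $r$ is an ordinary map, right distributivity gives $xr\in(a\oplus b)r=ar\oplus br$, so $ar\oplus br$ is an admissible decomposition of $xr$; hence $(\lambda\oplus\mu)(xr)\geq\lambda(ar)\wedge\mu(br)\geq\lambda(a)\wedge\mu(b)$, the last step because $\lambda,\mu$ are fuzzy hyperideals. Taking the supremum over all decompositions $x\in a\oplus b$ yields $(\lambda\oplus\mu)(xr)\geq(\lambda\oplus\mu)(x)$, and $(\lambda\oplus\mu)(rx)\geq(\lambda\oplus\mu)(x)$ follows symmetrically from left distributivity.

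For the hypersum condition, fix $x,y\in R$ and $w\in x\oplus y$; it suffices to prove $(\lambda\oplus\mu)(w)\geq(\lambda\oplus\mu)(x)\wedge(\lambda\oplus\mu)(y)$, since the infimum over $w\in x\oplus y$ then inherits the bound. Given decompositions $x\in a_1\oplus b_1$ and $y\in a_2\oplus b_2$, commutativity and associativity of $(R,\oplus)$ give $w\in x\oplus y\subseteq(a_1\oplus b_1)\oplus(a_2\oplus b_2)=(a_1\oplus a_2)\oplus(b_1\oplus b_2)$, so there exist $c\in a_1\oplus a_2$ and $d\in b_1\oplus b_2$ with $w\in c\oplus d$; thus $(\lambda\oplus\mu)(w)\geq\lambda(c)\wedge\mu(d)$. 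Applying axiom (i) for $\lambda$ and for $\mu$ gives $\lambda(c)\geq\lambda(a_1)\wedge\lambda(a_2)$ and $\mu(d)\geq\mu(b_1)\wedge\mu(b_2)$, and regrouping the four meets yields $(\lambda\oplus\mu)(w)\geq[\lambda(a_1)\wedge\mu(b_1)]\wedge[\lambda(a_2)\wedge\mu(b_2)]$. Since this holds for every pair of decompositions, I would take the supremum over the decompositions of $x$ and of $y$ and invoke the infinite meet distributive law in $[0,1]$ (the tool used in the proof of the earlier proposition) to pull both suprema through the meet, arriving at $(\lambda\oplus\mu)(x)\wedge(\lambda\oplus\mu)(y)$.

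The manipulation of the four-fold meet is routine; the one genuinely delicate point is the final step of the hypersum condition, where $c,d$ and even their existence depend on the chosen decompositions, so the inequality must be secured uniformly over all choices of decomposition before any supremum is taken, and the exchange of $\vee$ and $\wedge$ must be justified by the distributive law rather than assumed.
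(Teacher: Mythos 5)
Your proof is correct and follows essentially the same route as the paper's: verify the hypersum inequality by combining decompositions of $x$ and $y$ through commutativity/associativity of $\oplus$ and the infinite meet distributive law, and verify absorption by pushing a decomposition $x\in a\oplus b$ through distributivity to $xr\in ar\oplus br$. Your version is in fact slightly more careful than the paper's at the point where the intermediate elements $c\in a_1\oplus a_2$, $d\in b_1\oplus b_2$ must be chosen before any supremum is taken, and the extra check at $0$ is harmless.
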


\begin{proof}
For \ any $x,$ $x^{\prime }\in R$

$(\lambda \oplus \mu )(x)\wedge (\lambda \oplus \mu )(x^{\prime })=[\underset%
{x\in y\oplus z}{\vee }(\lambda (y)\wedge \mu (z))]\wedge \lbrack \underset{%
x^{\prime }\in y^{\prime }\oplus z^{\prime }}{\vee }(\lambda (y^{\prime
})\wedge \mu (z^{\prime }))]$

$=\underset{\QATOP{x\in y\oplus z}{x^{\prime }\in y^{\prime }\oplus
z^{\prime }}}{\vee }[[(\lambda (y)\wedge \mu (z))]\wedge \lbrack \lambda
(y^{\prime })\wedge \mu (z^{\prime })]]$

$=\underset{\QATOP{x\in y\oplus z}{x^{\prime }\in y^{\prime }\oplus
z^{\prime }}}{\vee }[[(\lambda (y)\wedge \lambda (y^{\prime }))]\wedge
\lbrack \mu (z)\wedge \mu (z^{\prime })]]$

$\leq \underset{x"\in y"\oplus z"}{\vee }[\underset{y"\in y\oplus y^{\prime }%
}{\inf }\lambda (y")\wedge \underset{z"\in z\oplus z^{\prime }}{\inf }\mu
(z")]$

$\leq \underset{x"\in x\oplus x^{\prime }}{\inf }(\lambda \oplus \mu )(x").$

Again

$(\lambda \oplus \mu )(x)=\underset{x\in y\oplus z}{\vee }[(\lambda
(y)\wedge \mu (z))]$

$\leq \underset{xa\subseteq ya\oplus za}{\vee }[\lambda (ya)\wedge \mu (za)]$
\ (where $a$ is any element of $R$)

$\leq \underset{xa\subseteq y^{\prime }\oplus z^{\prime }}{\vee }[\lambda
(y^{\prime })\wedge \mu (z^{\prime })]$

$=\underset{b\in xa}{\inf }(\lambda \oplus \mu )(b)$

Hence $\lambda \oplus \mu $ is a fuzzy hyperideal of $R.$
\end{proof}

\section{\protect\huge Fully idempotent semihyperrings}

A semihyperring $R$ is called fully idempotent if each hyperideal of $R$ is
idempotent (a hyperideal $I$ is idempotent if $I^{2}=I$), and a
semihyperring $R$ is said to be regular if for each $x\in R$, there exist $%
a\in R$ such that $x=xax$.

\begin{lemma}
A semihyperring $R$ is regular if and only if for any right hyperideal $I$
and for any left hyperideal $L$ of $R,$ we have $IL=I\cap L$.
\end{lemma}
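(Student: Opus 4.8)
The plan is to prove the two implications separately: the forward implication is a direct computation, and the converse needs one short preparatory step to compensate for the possible absence of a multiplicative identity. For the forward implication, assume $R$ is regular and let $I$ be a right and $L$ a left hyperideal. To see $IL\subseteq I\cap L$, observe that for $a\in I$, $b\in L$ we have $ab\in I$ (since $I$ is a right hyperideal and $b\in R$) and $ab\in L$ (since $L$ is a left hyperideal and $a\in R$); as $I$ and $L$ are closed under $\oplus$, every element of a finite hypersum $a_1b_1\oplus\cdots\oplus a_nb_n$ again lies in $I\cap L$. For the reverse inclusion, take $x\in I\cap L$ and use regularity to find $a$ with $x=xax=(xa)x$; since $xa\in I$ and $x\in L$, this gives $x\in IL$. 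Hence $IL=I\cap L$.

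For the converse, assume $IL=I\cap L$ for all right hyperideals $I$ and left hyperideals $L$, and fix $x\in R$; the goal is to produce $a\in R$ with $x=xax$. I would first record three easy facts: $xR$ is a right hyperideal, $Rx$ is a left hyperideal, and the set $xRx=\{\sum_{\mathrm{finite}}xr_ix:r_i\in R\}$ is closed under $\oplus$, the last because $xr_1x\oplus\cdots\oplus xr_nx=x(r_1\oplus\cdots\oplus r_n)x$ by the two distributive laws. Applying the hypothesis to $xR$ and $Rx$ gives $xR\cap Rx=(xR)(Rx)$; as every element of $(xR)(Rx)$ lies in a sum $\sum (xr_i)(s_ix)=\sum x(r_is_i)x$, we get $(xR)(Rx)\subseteq xRx$, while trivially $xRx\subseteq xR\cap Rx$, so $xR\cap Rx=xRx$. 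Since $x^2=x\cdot x$ belongs to $xR\cap Rx$, this yields the key intermediate fact $x^2\in xRx$.

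Next I would apply the hypothesis to the principal right hyperideal $\langle x\rangle_r$ (the smallest right hyperideal containing $x$) and the principal left hyperideal $\langle x\rangle_l$; one checks that these are the subsemihypergroups of $(R,\oplus)$ generated by $\{x\}\cup xR$ and $\{x\}\cup Rx$ respectively. From $x\in\langle x\rangle_r\cap\langle x\rangle_l=\langle x\rangle_r\langle x\rangle_l$, writing $x$ in a finite hypersum of products $pq$ with $p\in\langle x\rangle_r$, $q\in\langle x\rangle_l$ and distributing, every summand that arises has one of the forms $x\cdot x$, $x\cdot(rx)$, $(xs)\cdot x$, $(xs)(rx)$, hence lies in $\{x^2\}\cup xRx$. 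Therefore $x$ lies in the subsemihypergroup generated by $\{x^2\}\cup xRx$, which by the intermediate fact $x^2\in xRx$ together with the $\oplus$-closedness of $xRx$ is $xRx$ itself. Thus $x\in xRx$, say $x\in xr_1x\oplus\cdots\oplus xr_nx=x(r_1\oplus\cdots\oplus r_n)x$, and any $a\in r_1\oplus\cdots\oplus r_n$ with $x=xax$ witnesses that $R$ is regular.

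The distributive-law bookkeeping, and the care needed because $\oplus$ is set-valued so that each ``finite sum'' is really a union of hypersums, are routine. The step that genuinely requires thought is the converse: one cannot simply feed $xR$ and $Rx$ into the hypothesis and be done, because without a multiplicative identity $x$ need not belong to $xR$ or to $Rx$; this forces the use of the principal hyperideals, whose generator $x$ inevitably contributes the stray term $x^2$ to the product, and the crux of the argument is the preliminary observation $x^2\in xRx$ which lets that term be absorbed.
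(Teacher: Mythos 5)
Your argument is correct, and it is worth noting that the paper itself states this lemma without any proof (the nearest analogue is the later regularity theorem, where the forward direction is delegated to Golan's semiring text and the converse is sketched via characteristic functions). Your forward direction is the standard computation and matches what that later sketch does implicitly: $ab\in I\cap L$ for $a\in I$, $b\in L$, closure of $I$ and $L$ under hypersums gives $IL\subseteq I\cap L$, and regularity gives $x=(xa)x\in IL$ for $x\in I\cap L$. Where you genuinely improve on the paper is the converse: the paper's later proof asserts $x\in xR\cap Rx$ outright, which is unjustified in a semihyperring without a multiplicative identity, whereas you correctly route the argument through the principal one-sided hyperideals $\langle x\rangle_r$ and $\langle x\rangle_l$, observe that every product of generators lands in $\{x^2\}\cup xRx$, and absorb the stray term via the preliminary fact $x^2\in xR\cap Rx=(xR)(Rx)\subseteq xRx$. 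All the supporting claims check out: $xR$ and $Rx$ are one-sided hyperideals by the distributive laws; $xRx$ is closed under $\oplus$ since $xr_1x\oplus\cdots\oplus xr_nx=\{xsx: s\in r_1\oplus\cdots\oplus r_n\}$; and the description of $\langle x\rangle_r$ as the subsemihypergroup of $(R,\oplus)$ generated by $\{x\}\cup xR$ is verified by checking that this set is stable under right multiplication, again by distributivity. The net effect is a complete, identity-free proof of a statement the paper leaves as an unproved assertion, at the cost of the extra bookkeeping with principal hyperideals; if one were willing to assume $R$ unital, your first two paragraphs alone would already suffice.
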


Concerning these semihyperrings, we prove the following characterization
theorem.

\begin{theorem}
\label{THR 2.1}The following conditions for a semihyperring $R,$ are
equivalent :
\end{theorem}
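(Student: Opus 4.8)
The statement is a characterization theorem for fully idempotent semihyperrings, and although the list of equivalent conditions is cut off in the excerpt, the pattern in the classical (and semiring) literature — together with the abstract's promise that ``$R$ is fully idempotent if and only if the lattice of fuzzy hyperideals of $R$ is distributive under the sum and product'' — dictates what the equivalences must be. I would expect the list to read: (1) $R$ is fully idempotent; (2) every fuzzy hyperideal of $R$ is idempotent, i.e. $\mu\mu=\mu$ for each fuzzy hyperideal $\mu$; (3) $\lambda\wedge\mu=\lambda\mu$ for all fuzzy hyperideals $\lambda,\mu$ of $R$; and (4) the lattice of fuzzy hyperideals is distributive, i.e. $\lambda\wedge(\mu\oplus\nu)=(\lambda\wedge\mu)\oplus(\lambda\wedge\nu)$. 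The plan is to prove the cycle $(1)\Rightarrow(2)\Rightarrow(3)\Rightarrow(4)\Rightarrow(1)$ (reordering as convenient), using the level-set Theorem already proved in the Preliminaries as the bridge between the crisp and fuzzy worlds.

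For $(1)\Rightarrow(2)$, I would take a fuzzy hyperideal $\mu$ and show $\mu\mu=\mu$. The inequality $\mu\mu\le\mu$ is automatic from condition (ii) of a fuzzy hyperideal and the definition of the product (every term $\lambda(y_i)\wedge\mu(z_i)$ appearing in the sup for $(\mu\mu)(x)$ with $x\in\Sigma y_iz_i$ is bounded by $\mu(x)$). For the reverse, fix $x$ and let $t=\mu(x)$; then $x$ lies in the crisp hyperideal $\langle x\rangle\subseteq\mu_t$, and by full idempotency $\langle x\rangle=\langle x\rangle^2$, so $x\in\Sigma a_ib_i$ with $a_i,b_i\in\langle x\rangle\subseteq\mu_t$, giving $(\mu\mu)(x)\ge\bigwedge(\mu(a_i)\wedge\mu(b_i))\ge t=\mu(x)$. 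For $(2)\Rightarrow(3)$: the inequality $\lambda\mu\le\lambda\wedge\mu$ always holds (since $\lambda\mu\le\lambda\cdot 1=\lambda$ and symmetrically $\le\mu$, using that $1$-valued constants dominate); conversely $\lambda\wedge\mu$ is itself a fuzzy hyperideal (stated in the Remark), hence idempotent by (2), so $\lambda\wedge\mu=(\lambda\wedge\mu)(\lambda\wedge\mu)\le\lambda\mu$. For $(3)\Rightarrow(4)$: one inclusion in the distributive law always holds; for the other, compute $\lambda\wedge(\mu\oplus\nu)$ using $(3)$ to replace $\wedge$ by products, then use right-distributivity of the fuzzy product over the fuzzy sum (which should be verified once, mirroring the crisp identity $I(J\oplus K)=IJ\oplus IK$), and finally apply $(3)$ again in reverse. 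For $(4)\Rightarrow(1)$ — or more directly a converse back to $(1)$ — I would take a crisp hyperideal $I$, form its characteristic function $\chi_I$ (which is a fuzzy hyperideal by the level-set theorem since all its level sets equal $I$), note $\chi_I\chi_I=\chi_{I^2}$, and deduce from the fuzzy identity $\chi_I=\chi_I\wedge\chi_I=\chi_I\chi_I=\chi_{I^2}$ that $I=I^2$.

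The main obstacle, and the place where I would spend the most care, is the manipulation of the fuzzy product through the fuzzy sum — i.e. establishing the analogue of $I(J\oplus K)=IJ\oplus IK$ at the level of fuzzy hyperideals, and more generally justifying the interchange of $\bigvee$ (over all finite representations $x\in\Sigma y_iz_i$) with $\wedge$ and with the hyperoperation's $\inf$. This is exactly the kind of bookkeeping that the preceding propositions (on $\lambda\mu$ and $\lambda\oplus\mu$ being fuzzy hyperideals) already rehearse using the ``infinite meet distributive law'' of $[0,1]$, so I would lean on those computations as templates. A secondary subtlety is the passage $x\in\Sigma a_ib_i$ with all $a_i,b_i$ in a single level set $\mu_t$: here one uses that a finite sum (hyper-sum) of elements each of value $\ge t$ again has all its members of value $\ge t$, which is precisely property (i) of a fuzzy hyperideal applied inductively — worth stating as a small lemma but not difficult. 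Everything else is the routine two-sided-inequality style of argument standard in fuzzy algebra.
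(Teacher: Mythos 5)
Your reconstruction of conditions (1)--(3) matches the paper exactly, and your argument for their equivalence is correct, but it takes a genuinely different route. For $(1)\Rightarrow(2)$ the paper writes $(x)=(x)^{2}=RxRRxR$ and expands $x=\Sigma_{i}a_{i}xa_{i}^{\prime}b_{i}xb_{i}^{\prime}$, then bounds $\delta(a_{i}xa_{i}^{\prime})\wedge\delta(b_{i}xb_{i}^{\prime})\geq\delta(x)$; your level-set argument ($x\in\langle x\rangle=\langle x\rangle^{2}\subseteq\mu_{t}^{2}$ with $t=\mu(x)$) reaches the same inequality $(\mu\mu)(x)\geq\mu(x)$ while avoiding the implicit identification $(x)=RxR$, which is not justified in a semihyperring without identity, so your version is actually the more careful one. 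Likewise, the paper proves $(1)\Rightarrow(3)$ by repeating the whole $(x)=(x)^{2}$ computation, whereas you derive $(3)$ from $(2)$ via $\lambda\wedge\mu=(\lambda\wedge\mu)(\lambda\wedge\mu)\leq\lambda\mu$, using that $\lambda\wedge\mu$ is itself a fuzzy hyperideal and that the product is monotone; this is shorter and reuses $(2)$ instead of redoing the crisp idempotency argument. Your closing step via characteristic functions ($\chi_{I}\chi_{I}=\chi_{I^{2}}$, hence $I^{2}=I$) is exactly the paper's $(2)\Rightarrow(1)$.

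The one genuine mismatch is condition (4). In the paper, (4) is not lattice distributivity but the statement that $R$ is \emph{regular}, asserted to be equivalent to (1)--(3) only under the additional hypothesis that $R$ is commutative (and dispatched there with the remark that it is ``easy to verify,'' resting on the lemma that $R$ is regular iff $IL=I\cap L$ for every right hyperideal $I$ and left hyperideal $L$). The distributivity condition you propose as (4) is the content of the \emph{next} theorem in the paper, so your $(3)\Rightarrow(4)\Rightarrow(1)$ leg, while mathematically sound, proves a different (and also true) equivalence; it does not cover the commutative/regular equivalence that the actual statement requires. To complete the theorem as stated you would still need to check, for commutative $R$, that full idempotency is equivalent to regularity --- e.g.\ via $(x)=(x)^{2}\subseteq xRx$ in one direction and $I\cap L=IL$ applied to $I=L$ in the other.
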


(1) $R$ is fully idempotent,

(2) Fuzzy hyperideal of $R$ is idempotent.

(3) For each pair of fuzzy hyperideals $\lambda $ and $\mu $ of $R,$ $%
\lambda \wedge \mu =\lambda \mu .$

If $R$ is assumed to be commutative (that is, $xy=yx$ for all $x,y\in R$),
then the above conditions are equivalent to:

(4) $R$ is regular.

\begin{proof}
(1)$\Rightarrow $(2). Let $\delta $ be a fuzzy hyperideal of $R.$ For any $%
x\in R,$

$\delta ^{2}(x)=(\delta \delta )(x)$

$=\underset{x\in \Sigma _{i=1}^{p}y_{i}z_{i}}{\vee }[\underset{1\leq i\leq p}%
{\wedge }(\delta (y_{i})\wedge \delta (z_{i}))]$

$\leq \underset{x\in \Sigma _{i=1}^{p}y_{i}z_{i}}{\vee }[\underset{1\leq
i\leq p}{\wedge }(\delta (y_{i}z_{i})\wedge \delta (y_{i}z_{i}))]$

$=\underset{x\in \Sigma _{i=1}^{p}y_{i}z_{i}}{\vee }[[\underset{1\leq i\leq p%
}{\wedge }\delta (y_{i}z_{i})]\wedge \lbrack \underset{1\leq i\leq p}{\wedge 
}\delta (y_{i}z_{i})]]$

$\leq \underset{x\in \Sigma _{i=1}^{p}y_{i}z_{i}}{\vee }[\delta (x)\wedge
\delta (x)]=\delta (x).$

Since each hyperideal of $R$ is idempotent, therefore, $(x)=(x)^{2},$ for
each $x\in R.$ Since $x\in (x)$ it follows that $x\in (x)^{2}=RxRRxR.$
Hence, $x=\Sigma _{i=1}^{q}a_{i}xa_{i}^{\prime }b_{i}xb_{i}^{\prime }$ where 
$a_{i},a_{i}^{\prime },b_{i},b_{i}^{\prime }\in R$ and $q\in N.$ Now, $%
\delta (x)=\delta (x)\wedge \delta (x)\leq \delta (a_{i}xa_{i}^{\prime
})\wedge \delta (b_{i}xb_{i}^{\prime })$ \ \ $(1\leq i\leq q).$

Therefore,

$\delta (x)\leq \underset{1\leq i\leq q}{\wedge }[\delta
(a_{i}xa_{i}^{\prime })\wedge \delta (b_{i}xb_{i}^{\prime })]$

$\leq \underset{x\in \Sigma _{i=1}^{q}a_{i}xa_{i}^{\prime
}b_{i}xb_{i}^{\prime }}{\vee }[\underset{1\leq i\leq q}{\wedge }[\delta
(a_{i}xa_{i}^{\prime })\wedge \delta (b_{i}xb_{i}^{\prime })]]$

$\leq \underset{x\in \Sigma _{i=1}^{q}y_{i}z_{i}}{\vee }[\underset{1\leq
j\leq r}{\wedge }[\delta (y_{i})\wedge \delta (z_{i})]]$

$=(\delta \delta )(x)=\delta ^{2}(x).$

Thus $\delta ^{2}=\delta .$

(2)$\Rightarrow $(1). Let $I$ be an hyperideal of $R.$ Thus $\delta _{I},$
the characteristic function of $I,$ is a fuzzy hyperideal of $R.$ Hence $%
\delta _{I}^{2}=\delta _{I}.$ Therefore, $\delta _{I}\delta _{I}=\delta
_{I}, $ hence $\delta _{I^{2}}=\delta _{I}.$ It follows that $I^{2}=I.$
Hence (2)$\Leftrightarrow $(1).

(1)$\Rightarrow $(3). Let $\lambda $ and $\mu $ be any pair of fuzzy
hyperideals of $R.$ For any $x\in R.$

$(\lambda \mu )(x)=\underset{x\in \Sigma _{i=1}^{p}y_{i}z_{i}}{\vee }[%
\underset{1\leq i\leq p}{\wedge }(\lambda (y_{i})\wedge \mu (z_{i}))]$

$\leq \underset{x\in \Sigma _{i=1}^{p}y_{i}z_{i}}{\vee }[\underset{1\leq
i\leq p}{\wedge }(\lambda (y_{i}z_{i})\wedge \mu (y_{i}z_{i}))]$

$\leq \underset{x\in \Sigma _{i=1}^{p}y_{i}z_{i}}{\vee }[[\underset{1\leq
i\leq p}{\wedge }\lambda (y_{i}z_{i})]\wedge \lbrack \underset{1\leq i\leq p}%
{\wedge }\mu (y_{i}z_{i})]]$

$\leq \underset{x\in \Sigma _{i=1}^{p}y_{i}z_{i}}{\vee }[\lambda (x)\wedge
\mu (x)]$

$=\lambda (x)\wedge \mu (x)=(\lambda \wedge \mu )(x).$

Again, since $R$ is fully idempotent, $(x)=(x)^{2},$ for any $x\in R.$
Hence, as argued in the first part of the proof of this theorem, we have

$(\lambda \wedge \mu )(x)=\lambda (x)\wedge \mu (x)$

$\leq \underset{x\in \Sigma _{i=1}^{p}y_{i}z_{i}}{\vee }[\underset{1\leq
i\leq p}{\wedge }(\lambda (y_{i})\wedge \mu (z_{i}))]$

$=(\lambda \mu )(x).$

Thus $\lambda \wedge \mu =\lambda \mu .$

(3)$\Rightarrow $(1). Let $\lambda $ and $\mu $ be any pair of fuzzy
hyperideals of $R.$ We have, $\lambda \wedge \mu =\lambda \mu .$ take $\mu
=\lambda .$

Thus $\lambda \wedge \lambda =\lambda ^{2},$ that is, $\lambda =\lambda
^{2}, $ where $\lambda $ is any fuzzy hyperideal of $R.$ Hence, (3)$%
\Rightarrow $(2). Since we already proved that (1) and (2) are equivalent,
hence it follows that (3)$\Rightarrow $(1) and (1)$\Rightarrow $(3). This
establishes (1)$\Leftrightarrow $(2)$\Leftrightarrow $(3). Finally, if $R$
is commutative then it is easy to verify that (1)$\Leftrightarrow $(4).

Next, we prove another characterization theorem for fully idempotent
semihyperrings.
\end{proof}

\begin{theorem}
\label{THM1}The following conditions for a semihyperring $R,$ are equivalent;
\end{theorem}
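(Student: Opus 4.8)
On the assumption that the listed conditions are the ones announced in the Introduction and abstract --- $(1)$ $R$ is fully idempotent; $(2)$ the lattice of fuzzy hyperideals of $R$, with join $\oplus$ and meet given by the product, is distributive; $(3)$ every proper fuzzy hyperideal of $R$ is the intersection of the fuzzy prime hyperideals of $R$ containing it (and possibly $(3')$, the same with ``fuzzy irreducible'' in place of ``fuzzy prime'') --- the plan is to bolt the new items onto Theorem~\ref{THR 2.1}, which already identifies full idempotence with the identity $\lambda\wedge\mu=\lambda\mu$ for all fuzzy hyperideals, and with the idempotence $\lambda=\lambda^{2}$ of every fuzzy hyperideal. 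For $(1)\Rightarrow(2)$ I would first record the two external distributive laws $\lambda(\mu\oplus\nu)=\lambda\mu\oplus\lambda\nu$ and $(\mu\oplus\nu)\lambda=\mu\lambda\oplus\nu\lambda$, valid for arbitrary fuzzy hyperideals; each follows by expanding both sides from the definitions of $\lambda\mu$ and $\lambda\oplus\mu$ and invoking the infinite meet--distributive law in $[0,1]$, in the same style as the proof that $\lambda\oplus\mu$ is a fuzzy hyperideal. Assuming $(1)$, Theorem~\ref{THR 2.1} gives $\lambda\mu=\lambda\wedge\mu$, so these laws read $\lambda\wedge(\mu\oplus\nu)=(\lambda\wedge\mu)\oplus(\lambda\wedge\nu)$; since $\oplus$ is easily checked to be the join and $\wedge$ the meet of the lattice of fuzzy hyperideals, and the meet now coincides with the product, the lattice is distributive. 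For $(2)\Rightarrow(1)$: in a lattice whose meet is the product one has $\lambda\cdot\lambda=\lambda\wedge\lambda=\lambda$ for every fuzzy hyperideal $\lambda$, and Theorem~\ref{THR 2.1} then delivers full idempotence.

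For $(1)\Rightarrow(3)$, fix a proper fuzzy hyperideal $\lambda$ and an element $a\in R$ with $t:=\lambda(a)<1$. Consider the family $\mathcal{F}$ of fuzzy hyperideals $\mu$ with $\lambda\le\mu$ and $\mu(a)=t$; it contains $\lambda$, and the pointwise supremum of a chain in $\mathcal{F}$ is again a fuzzy hyperideal taking the value $t$ at $a$ (a routine $\varepsilon$-argument using that the index set is a chain), so Zorn's Lemma yields a maximal element $\pi$, which is proper since $\pi(a)=t<1$. Then $\pi$ is fuzzy irreducible: if $\pi=\sigma\wedge\tau$ with $\pi\lneq\sigma$ and $\pi\lneq\tau$, then maximality forces $\sigma(a)>t$ and $\tau(a)>t$, so $\pi(a)=\sigma(a)\wedge\tau(a)>t$, a contradiction. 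Next I would prove, as a small lemma using $(1)$ together with the distributivity just established, that every fuzzy irreducible hyperideal of a fully idempotent semihyperring is fuzzy prime: if $\sigma\tau\le\pi$ then $\sigma\wedge\tau=\sigma\tau\le\pi$ by Theorem~\ref{THR 2.1}, whence $\pi=\pi\oplus(\sigma\wedge\tau)=(\pi\oplus\sigma)\wedge(\pi\oplus\tau)$ by distributivity, and irreducibility of $\pi$ gives $\pi=\pi\oplus\sigma$ or $\pi=\pi\oplus\tau$, i.e. $\sigma\le\pi$ or $\tau\le\pi$. Thus for each $a$ with $\lambda(a)<1$ we obtain a fuzzy prime hyperideal $\pi_{a}\ge\lambda$ with $\pi_{a}(a)=\lambda(a)$; since every fuzzy prime $\pi\ge\lambda$ also satisfies $\pi(b)\ge\lambda(b)$ for all $b$, and $\lambda(b)=1$ at the remaining points, the intersection of all fuzzy prime hyperideals containing $\lambda$ equals $\lambda$. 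The same Zorn construction proves $(1)\Rightarrow(3')$ directly, and $(3)\Rightarrow(3')$ is trivial because a fuzzy prime hyperideal is fuzzy irreducible.

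For $(3)\Rightarrow(1)$: given any fuzzy hyperideal $\lambda$, we have $\lambda^{2}\le\lambda$ (the easy half of Theorem~\ref{THR 2.1}). If $\lambda^{2}$ is proper, write it as the intersection of the fuzzy prime hyperideals $\pi$ with $\lambda^{2}\le\pi$; for each such $\pi$, $\lambda\lambda=\lambda^{2}\le\pi$ together with primeness gives $\lambda\le\pi$, hence $\lambda\le\bigwedge\pi=\lambda^{2}$ and $\lambda=\lambda^{2}$. If $\lambda^{2}$ is not proper it equals the constant fuzzy hyperideal $\chi_{R}$, and then $\lambda=\chi_{R}=\lambda^{2}$ as well. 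So every fuzzy hyperideal is idempotent and Theorem~\ref{THR 2.1} gives $(1)$. I expect the crux of the whole argument to be the forward prime-decomposition step: verifying that suprema of chains remain inside $\mathcal{F}$ (handling the infimum over $z\in x\oplus y$ simultaneously with the chain supremum) and proving the ``fuzzy irreducible $\Rightarrow$ fuzzy prime'' lemma, which is precisely where full idempotence is used; by contrast the distributivity equivalence and the converse above are essentially formal consequences of Theorem~\ref{THR 2.1}.
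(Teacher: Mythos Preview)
Your guess about the listed conditions is off. In the paper, Theorem~\ref{THM1} has only \emph{two} items: (1) $R$ is fully idempotent, and (2) the set of fuzzy hyperideals of $R$ (ordered by $\leq$) forms a distributive lattice under sum and intersection, with $\lambda\wedge\mu=\lambda\mu$ for every pair of fuzzy hyperideals. The prime-decomposition statement you call (3) is \emph{not} part of Theorem~\ref{THM1}; it appears only in a later, separate theorem, and the paper builds up to it through an intermediate result (Theorem~\ref{THR3}: fuzzy prime $\Leftrightarrow$ fuzzy irreducible when $R$ is fully idempotent) and a Zorn-lemma step (Lemma~\ref{LEMMA1}). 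Your sketches for those pieces are in fact very close to how the paper actually argues them --- the Zorn construction, the irreducibility of the maximal element, and the ``irreducible $\Rightarrow$ prime'' lemma via distributivity all match --- but they belong to those later results, not to Theorem~\ref{THM1}.

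For the actual content of Theorem~\ref{THM1}, your $(2)\Rightarrow(1)$ coincides with the paper's. For $(1)\Rightarrow(2)$ you take a somewhat different route: you propose to first establish the multiplicative distributive laws $\lambda(\mu\oplus\nu)=\lambda\mu\oplus\lambda\nu$ and $(\mu\oplus\nu)\lambda=\mu\lambda\oplus\nu\lambda$ for arbitrary fuzzy hyperideals, and then transfer them to $\wedge$ via the identity $\lambda\mu=\lambda\wedge\mu$ supplied by Theorem~\ref{THR 2.1}. The paper instead proves the lattice identity $(\lambda\wedge\delta)\oplus\eta=(\lambda\oplus\eta)\wedge(\delta\oplus\eta)$ by a direct pointwise computation: one inequality is elementary, and for the reverse it first rewrites $(\lambda\oplus\eta)\wedge(\delta\oplus\eta)$ as the product $(\lambda\oplus\eta)(\delta\oplus\eta)$ (using full idempotence and Theorem~\ref{THR 2.1}), then expands and estimates this product down to $[(\lambda\wedge\delta)\oplus\eta](x)$. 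Your route is conceptually tidier and does work, but the identity $\lambda(\mu\oplus\nu)=\lambda\mu\oplus\lambda\nu$ for fuzzy hyperideals is a bit more delicate than ``expand both sides'': the $\leq$ direction requires choosing, for each factor $z_i$, a decomposition $z_i\in a_i\oplus b_i$ and then invoking the infinite meet--distributive law to commute the finite $\bigwedge_i$ past the suprema over those choices before reassembling $x\in p\oplus q$. The paper sidesteps isolating this identity by doing the whole estimate in one computation.
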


(1) $R$ is fully idempotent,

(2) The set of all fuzzy hyperideals of $R$ (ordered by $\leq $ ) forms a
distributive lattice $FI_{R}$ under the sum and intersection of fuzzy
hyperideals with $\lambda \wedge \mu =\lambda \mu ,$ for each pair of fuzzy
hyperideals $\lambda ,\mu $ of $R.$

\begin{proof}
(1)$\Leftrightarrow $(2). The set $FI_{R}$ of all fuzzy hyperideals of $R$
(ordered by $\leq $ ) is clearly a lattice under the sum and intersection of
fuzzy hyperideals. Moreover, since $R$ is a fully idempotent semihyperring,
it follows from above Theorem that $\lambda \wedge \mu =\lambda \mu ,$ for
each pair of fuzzy hyperideals $\lambda ,\mu $ of $R.$ We now show that $%
FI_{R}$ is a distributive lattice, that is, for fuzzy hyperideals $\lambda
,\delta $ and $\eta $ of $R,$ we have

$[(\lambda \wedge \delta )\oplus \eta )]=[(\lambda \oplus \eta )\wedge
(\delta \oplus \eta )].$

For any $x\in R,$

$[(\lambda \wedge \delta )\oplus \eta )](x)=\underset{x\in y\oplus z}{\vee }%
[(\lambda \wedge \delta )(y)\wedge \eta (z)]$

$=\underset{x\in y\oplus z}{\vee }[\lambda (y)\wedge \delta (y)\wedge \eta
(z)]$

$=\underset{x\in y\oplus z}{\vee }[\lambda (y)\wedge \delta (y)\wedge \eta
(z)\wedge \eta (z)]$

$=\underset{x\in y\oplus z}{\vee }[\lambda (y)\wedge \eta (z)]\wedge \lbrack
\delta (y)\wedge \eta (z)]$

$=\underset{x\in y\oplus z}{\vee }[(\lambda \oplus \eta )(x)\wedge (\delta
\oplus \eta )(x)]$

because, for $x\in y\oplus z,$ $\lambda (y)\oplus \eta (z)\leq (\lambda
\oplus \eta )(x)$ and, similarly, $\delta (y)\wedge \eta (z)\leq (\delta
\oplus \eta )(x)$

$=(\lambda \oplus \eta )(x)\wedge (\delta \oplus \eta )(x)$

$=[(\lambda \oplus \eta )(x)\wedge (\delta \oplus \eta )](x)$

Again,

$[(\lambda \oplus \eta )\wedge (\delta \oplus \eta )](x)$

$=[(\lambda \oplus \eta )(\delta \oplus \eta )](x)$

$=\underset{x\in \Sigma _{i=1}^{p}y_{i}z_{i}}{\vee }[\underset{1\leq i\leq p}%
{\wedge }[(\lambda \oplus \eta )(y_{i})\wedge (\delta \oplus \eta )(z_{i})]]$

$=\underset{x\in \Sigma _{i=1}^{p}y_{i}z_{i}}{\vee }[\underset{1\leq i\leq p}%
{\wedge }[[\underset{y_{i}\in r_{i}\oplus s_{i}}{\vee }(\lambda
(r_{i})\wedge \eta (s_{i}))]\wedge \lbrack \underset{z_{i}\in t_{i}\oplus
u_{i}}{\vee }(\delta (t_{i})\wedge \eta (u_{i})]]]$

$=\underset{x\in \Sigma _{i=1}^{p}y_{i}z_{i}}{\vee }[\underset{1\leq i\leq p}%
{\wedge }[\underset{\QATOP{y_{i}\in r_{i}\oplus s_{i}}{z_{i}\in t_{i}\oplus
u_{i}}}{\vee }[(\lambda (r_{i})\wedge \eta (s_{i})\wedge (\delta
(t_{i})\wedge \eta (u_{i}))]]]$

using infinite meet distributive law

$=\underset{x\in \Sigma _{i=1}^{p}y_{i}z_{i}}{\vee }[\underset{1\leq i\leq p}%
{\wedge }[\underset{\QATOP{y_{i}\in r_{i}\oplus s_{i}}{z_{i}\in t_{i}\oplus
u_{i}}}{\vee }[(\lambda (r_{i})\wedge \eta (s_{i})\wedge \eta (s_{i})\wedge
(\delta (t_{i})\wedge \eta (u_{i})]]]$

$\leq \underset{x\in \Sigma _{i=1}^{p}y_{i}z_{i}}{\vee }[\underset{1\leq
i\leq p}{\wedge }[\underset{\QATOP{y_{i}\in r_{i}\oplus s_{i}}{z_{i}\in
t_{i}\oplus u_{i}}}{\vee }[(\lambda (r_{i}t_{i})\wedge \delta
(r_{i}t_{i})\wedge \eta (s_{i}t_{i})\wedge \eta (s_{i}u_{i})\wedge \eta
(r_{i}u_{i})]]]$

$\leq \underset{x\in \Sigma _{i=1}^{p}y_{i}z_{i}}{\vee }[\underset{1\leq
i\leq p}{\wedge }[\underset{\QATOP{y_{i}\in r_{i}\oplus s_{i}}{z_{i}\in
t_{i}\oplus u_{i}}}{\vee }[(\lambda \wedge \delta )(r_{i}t_{i})\wedge \eta
(s_{i}t_{i}\oplus s_{i}u_{i}\oplus r_{i}u_{i})]]]$

$\leq \underset{x\in \Sigma _{i=1}^{p}y_{i}z_{i}}{\vee }[\underset{1\leq
i\leq p}{\wedge }[(\lambda \wedge \delta )\wedge \eta ](y_{i}z_{i})]$

$\leq \underset{x\in \Sigma _{i=1}^{p}y_{i}z_{i}}{\vee }[(\lambda \wedge
\delta )\wedge \eta ](x)]$

$=[(\lambda \wedge \delta )\wedge \eta ](x).$

$[(\lambda \oplus \eta )\wedge (\delta \oplus \eta )]=[(\lambda \wedge
\delta )\wedge \eta ].$

(2)$\Leftrightarrow $(1). Suppose that the set $FI_{R},$ of all the fuzzy
hyperideals of $R$ (ordered by $\leq $ ) is a distributive lattice under the
sum and intersection of fuzzy hyperideals with $\lambda \wedge \mu =\lambda
\mu ,$ for each pair of fuzzy hyperideals $\lambda ,\mu $ of $R.$

Then for any fuzzy hyperideals $\lambda $ of $R,$ we have $\lambda
^{2}=\lambda .\lambda =\lambda \wedge \lambda =$g.l.b$.$ of $\{\lambda
,\lambda \}=\lambda .$ Hence $R$ is fully idempotent.
\end{proof}

\begin{definition}
A hyperideal $I$ of a semihyperring $R$ is called a prime hyperideal of $R$
if for all hyperideals $A,B$ of $R,$ $AB\subseteq I$ implies that either $%
A\subseteq I$ or $B\subseteq I$.
\end{definition}

\begin{definition}
A hyperideal $I$ of a semihyperring $R$ is called an irreducible if for all
hyperideals $A,B$ of $R,$ $A\cap B=I,$ implies $A=I$ or $B=I.$
\end{definition}

\begin{definition}
A fuzzy hyperideal $\eta $ of a semihyperring $R$ is called a fuzzy prime
hyperideal of $R$ if for\ fuzzy hyperideals $\lambda $ and $\mu $ of $R,$ $%
\lambda \mu \leq \eta \Rightarrow \lambda \leq \eta $ or $\mu \leq \eta ;$ $%
\eta $ is called fuzzy irreducible if for fuzzy hyperideals $\lambda ,\mu $
of $\lambda $,$\mu $ of $R,$ $\lambda \wedge \mu =\eta \Rightarrow \lambda
=\eta $ or $\mu =\eta .$
\end{definition}

\begin{theorem}
\label{THR3}Let $R$ be a fully idempotent semihyperring. For a fuzzy
hyperideal $\eta $ of $R,$ the following conditions are equivalent:
\end{theorem}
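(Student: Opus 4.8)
The list of conditions that the excerpt cuts off is, in all likelihood, the equivalence of $\eta$ being a \emph{fuzzy prime} hyperideal and $\eta$ being a \emph{fuzzy irreducible} hyperideal; in this fully idempotent setting these will also be equivalent to $\eta$ being ``fuzzy strongly irreducible'' in the sense that $\lambda\wedge\mu\le\eta$ forces $\lambda\le\eta$ or $\mu\le\eta$, and that last equivalence costs nothing: by Theorem~\ref{THR 2.1} we have $\lambda\mu=\lambda\wedge\mu$ for every pair of fuzzy hyperideals of $R$, so ``$\lambda\mu\le\eta$'' and ``$\lambda\wedge\mu\le\eta$'' are literally the same hypothesis. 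The real content is therefore fuzzy prime $\Leftrightarrow$ fuzzy irreducible, and the plan is to prove the two implications using the two tools already available for a fully idempotent $R$: the identity $\lambda\mu=\lambda\wedge\mu$ (Theorem~\ref{THR 2.1}) and the distributivity of the lattice $FI_R$ of fuzzy hyperideals of $R$ under $\oplus$ (the join) and $\wedge$ (the meet) (Theorem~\ref{THM1}).

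Before the two implications I would record an elementary lemma: for fuzzy hyperideals $\gamma,\eta$ of $R$ one has $\gamma\le\eta$ if and only if $\gamma\oplus\eta=\eta$. First, $\eta\oplus\eta=\eta$: the inequality $\eta\oplus\eta\le\eta$ is immediate from axiom (i) of a fuzzy hyperideal (for $x\in y\oplus z$ one has $\eta(x)\ge\inf_{w\in y\oplus z}\eta(w)\ge\eta(y)\wedge\eta(z)$), and $\eta\oplus\eta\ge\eta$ follows by evaluating the sum on the decomposition $x\in x\oplus 0$ and using $\eta(0)\ge\eta(x)$ (which holds by axiom (ii), since $\eta(0)=\eta(x\cdot 0)\ge\eta(x)$). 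If $\gamma\le\eta$ then $\gamma\oplus\eta\le\eta\oplus\eta=\eta$, while $\gamma\oplus\eta\ge\eta$ by evaluating on $x\in 0\oplus x$ and using $\gamma(0)=1$; conversely $\gamma\le\gamma\oplus\eta=\eta$. (I am using here the normalization $\mu(0)=1$ that is built into the definition of a fuzzy subsemihypermodule of $R_R$, i.e.\ of a fuzzy hyperideal.)

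For fuzzy prime $\Rightarrow$ fuzzy irreducible: if $\lambda\wedge\mu=\eta$, then $\lambda\mu=\lambda\wedge\mu=\eta\le\eta$, so primeness gives $\lambda\le\eta$ or $\mu\le\eta$; since $\eta=\lambda\wedge\mu\le\lambda$ and $\eta\le\mu$, this upgrades to $\lambda=\eta$ or $\mu=\eta$. For fuzzy irreducible $\Rightarrow$ fuzzy prime: assume $\lambda\mu\le\eta$, i.e.\ $\lambda\wedge\mu\le\eta$. By distributivity of $FI_R$ (Theorem~\ref{THM1}),
\[
(\lambda\oplus\eta)\wedge(\mu\oplus\eta)=(\lambda\wedge\mu)\oplus\eta=\eta,
\]
the last equality holding by the lemma since $\lambda\wedge\mu\le\eta$ (and $(\lambda\wedge\mu)(0)=\lambda(0)\wedge\mu(0)=1$). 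Irreducibility of $\eta$ then forces $\lambda\oplus\eta=\eta$ or $\mu\oplus\eta=\eta$, i.e.\ (again by the lemma) $\lambda\le\eta$ or $\mu\le\eta$, which is primeness.

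The whole argument is short once Theorems~\ref{THR 2.1} and~\ref{THM1} are in hand, so I do not expect a serious obstacle; the one point that really matters is the normalization $\eta(0)=1$ for fuzzy hyperideals. It is exactly what makes $\oplus$ treat a fuzzy hyperideal $\eta$ as absorbing everything lying below it — and hence what makes the lemma $\gamma\le\eta\iff\gamma\oplus\eta=\eta$ true and lets ``$\lambda\oplus\eta=\eta$'' be read off as ``$\lambda\le\eta$'' — so the proof genuinely relies on the definition of a fuzzy hyperideal that includes $\mu(0)=1$ (the one coming from fuzzy subsemihypermodules of $R_R$) rather than the bare two-axiom definition. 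The displayed distributive identity itself is just a faithful transcription of Theorem~\ref{THM1}.
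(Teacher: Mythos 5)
Your proposal correctly identifies the two conditions (fuzzy prime and fuzzy irreducible) and establishes their equivalence by essentially the same route as the paper: prime $\Rightarrow$ irreducible via the identity $\lambda\mu=\lambda\wedge\mu$ from Theorem~\ref{THR 2.1}, and irreducible $\Rightarrow$ prime via the distributive law $(\lambda\wedge\mu)\oplus\eta=(\lambda\oplus\eta)\wedge(\mu\oplus\eta)$ from Theorem~\ref{THM1} together with $(\lambda\wedge\mu)\oplus\eta=\eta$. The only substantive difference is that you explicitly prove the order-theoretic fact $\gamma\le\eta\iff\gamma\oplus\eta=\eta$ (correctly flagging its reliance on the normalization $\gamma(0)=1$), a step the paper uses without justification.
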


(1) $\eta $ is a fuzzy prime hyperideal,

(2) $\eta $ is a fuzzy irreducible hyperideal.

\begin{proof}
(1) Assume that $\eta $ is a fuzzy prime hyperideal. We show that $\eta $ is
fuzzy irreducible, that is , for fuzzy hyperideals $\lambda ,\mu $ of $R,$ $%
\lambda \wedge \mu =\eta \Rightarrow \lambda =\eta $ or $\mu =\eta .$ Since $%
R$ is a fully idempotent semihyperring, the set of fuzzy hyperideals of $R$
(ordered by $\leq $ ) is a distributive lattice under the sum and
intersection of fuzzy hyperideals by \ref{THM1}

This implies that $\eta =g.l.b.$of \{$\lambda ,\mu $\}, since $\eta =\lambda
\wedge \mu .$ Thus it follows that $\lambda \leq \eta $ and $\eta \leq \mu .$
On the other hand, as $\lambda \leq R$ is fully idempotent, it follows from %
\ref{THR 2.1} (3) that $\lambda \wedge \mu =\lambda \mu .$ Hence $\eta
=\lambda \wedge \mu =\lambda \mu $. Since $\eta $ is a fuzzy prime
hyperideal, by the above definition, either $\lambda \leq \eta $ or $\mu
\leq \eta .$ As already noted, $\eta \leq \lambda $ and $\eta \leq \mu ;$ so
it follows that either $\lambda =\eta $ or $\mu =\eta .$ Hence $\eta $ is a
fuzzy irreducible hyperideal.

(2) Conversely, assume that $\eta $ is a fuzzy irreducible hyperideal. We
show that $\eta $ is a fuzzy prime hyperideal. Suppose there exist fuzzy
hyperideals $\lambda $ and $\mu $ such that $\lambda \mu \leq \eta .$ Since $%
R$ is assumed to be a fully idempotent semihyperring, it follows from \ref%
{THR 2.1}(3) that the set of fuzzy hyperideals of $R$ (ordered by $\leq $ )
is a distributive lattice with respect to the sum and intersection of fuzzy
hyperideals. Hence the inequality $\lambda \wedge \mu \leq \eta \Rightarrow
(\lambda \wedge \mu )\oplus \eta =\eta ,$ and using the distributivity of
this lattice, we have $\eta =(\lambda \wedge \mu )\oplus \eta =(\lambda
\oplus \eta )\wedge (\mu \oplus \eta )].$ Since $\eta $ is fuzzy
irreducible, it follows that either $\lambda \oplus \eta =\eta $ or $\mu
\oplus \eta =\eta .$ This implies that either $\lambda \leq \eta $ or $\mu
\leq \eta .$ Hence $\eta $ is a fuzzy prime hyperideal.
\end{proof}

\begin{lemma}
\label{LEMMA1}Let $R$ be a fully idempotent semihyperring. If $\lambda $ is
a fuzzy hyperideal of $R$ with $\lambda (a)=\alpha $, where $a$ is any
element of $R$ and $\alpha \in \lbrack 0,1],$ then there exists a fuzzy
prime hyperideal $\eta $ of $R$ such that $\lambda \leq \eta $ and $\eta
(a)=\alpha .$
\end{lemma}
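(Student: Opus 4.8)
=== PROOF PROPOSAL ===

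The plan is to run a Zorn's lemma argument on the family of fuzzy hyperideals that contain $\lambda$ and take the value $\alpha$ at $a$, and then use the distributive-lattice structure from Theorem \ref{THM1} together with the prime/irreducible equivalence from Theorem \ref{THR3} to conclude that a maximal such element is fuzzy prime.

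First I would set $\mathcal{F} = \{\nu : \nu \text{ is a fuzzy hyperideal of } R,\ \lambda \leq \nu,\ \nu(a) = \alpha\}$. This is nonempty since $\lambda \in \mathcal{F}$. Order $\mathcal{F}$ by $\leq$. For a chain $\{\nu_i : i \in \Lambda\}$ in $\mathcal{F}$, I would check that the pointwise supremum $\nu = \vee_{i} \nu_i$ is again a fuzzy hyperideal: condition (ii) of the fuzzy hyperideal definition, $\mu(xy) \geq \mu(x)$, is immediate pointwise, and the inf-over-hypersum condition $\inf_{z \in x \oplus y}\nu(z) \geq \nu(x) \wedge \nu(y)$ follows because for a chain one can push the sup through the inf (given $s < \nu(x) \wedge \nu(y)$, some single $\nu_i$ already exceeds $s$ at both $x$ and $y$, hence at every $z \in x\oplus y$). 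Also $\nu(a) = \sup_i \nu_i(a) = \alpha$ and $\lambda \leq \nu$, so $\nu \in \mathcal{F}$ is an upper bound. By Zorn's lemma $\mathcal{F}$ has a maximal element $\eta$.

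Next I would show $\eta$ is fuzzy irreducible, hence fuzzy prime by Theorem \ref{THR3}. Suppose $\eta = \beta \wedge \gamma$ for fuzzy hyperideals $\beta, \gamma$ of $R$ with $\beta \neq \eta$ and $\gamma \neq \eta$. Since $\eta = \beta \wedge \gamma \leq \beta$ and likewise $\leq \gamma$, maximality of $\eta$ in $\mathcal{F}$ forces both $\beta$ and $\gamma$ to fail membership in $\mathcal{F}$; since $\lambda \leq \eta \leq \beta$ and $\lambda \leq \eta \leq \gamma$ hold automatically, the only way to fail is $\beta(a) \neq \alpha$ and $\gamma(a) \neq \alpha$. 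But $\beta(a) \geq \eta(a) = \alpha$ and $\gamma(a) \geq \eta(a) = \alpha$, so in fact $\beta(a) > \alpha$ and $\gamma(a) > \alpha$, whence $\eta(a) = (\beta \wedge \gamma)(a) = \min\{\beta(a),\gamma(a)\} > \alpha$, contradicting $\eta(a) = \alpha$. Therefore $\eta$ is fuzzy irreducible, and since $R$ is fully idempotent, Theorem \ref{THR3} gives that $\eta$ is a fuzzy prime hyperideal. By construction $\lambda \leq \eta$ and $\eta(a) = \alpha$, completing the proof.

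The main obstacle I anticipate is the verification that the chain-supremum $\nu = \vee_i \nu_i$ satisfies the hypersum axiom $\inf_{z \in x \oplus y}\nu(z) \geq \nu(x)\wedge\nu(y)$; this is where the chain (totally ordered) hypothesis is essential, because for a general family the supremum need not be a fuzzy hyperideal. The cofinality-of-the-chain trick — pick $i$ with $\nu_i(x)$ and $\nu_i(y)$ both close to the respective suprema, using that the chain is linearly ordered so one $\nu_i$ dominates both — handles it, but it should be written carefully. Everything else (nonemptiness, the upper-bound bookkeeping, and the irreducibility argument) is routine given Theorems \ref{THM1} and \ref{THR3}.
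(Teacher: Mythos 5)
Your proposal is correct and follows essentially the same route as the paper: the same Zorn's lemma argument on the family of fuzzy hyperideals containing $\lambda$ with value $\alpha$ at $a$, followed by showing a maximal element is fuzzy irreducible and invoking Theorem \ref{THR3}. In fact your write-up is slightly more careful than the paper's at the two delicate points — pushing the supremum through the infimum over $x\oplus y$ using the chain hypothesis, and the observation that $\beta(a)\geq\alpha$ together with $\beta(a)\neq\alpha$ forces $\beta(a)>\alpha$ so that the minimum exceeds $\alpha$.
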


\begin{proof}
Let $X=\{\mu :\mu $ is a fuzzy hyperideal of $R,$ $\mu (a)=\alpha ,$ and $%
\lambda \leq \mu \}.$ Then $X\neq \phi ,$ since $\lambda \in X.$ Let $\tau $
be a totally ordered subset of $X$, say $\tau =\{\lambda _{i}:i\in I\}.$ We
claim that $\vee _{i\in I}\lambda _{i}$ is a fuzzy hyperideal of $R.$
Clearly, $\vee _{i\in I}\lambda _{i}(x)=1.$ Also, for any $x,r\in R,$ we have

$\underset{i}{\vee }\lambda _{i}(x)=\underset{i}{\vee }\lambda _{i}(x)\leq $ 
$[\vee (\lambda _{i}(xr))]=\underset{i}{\vee }\lambda _{i}(xr).$

Similarly, $\underset{i}{\vee }\lambda _{i}(x)\leq $ $\underset{i}{\vee
\lambda _{i}(rx)}.$ Finally, we show that $\underset{z\in x\oplus y}{\inf }%
\underset{i}{\vee }\lambda _{i}(z)\geq \underset{i}{\vee }\lambda
_{i}(x)\wedge \underset{i}{\vee }\lambda _{i}(y),$ for any $x,y\in R.$
Consider

$\underset{i\in I}{\vee }\lambda _{i}(x)\wedge \underset{i\in I}{\vee }%
\lambda _{i}(y)=\underset{i\in I}{\vee }\lambda _{i}(x)\wedge \underset{j\in
I}{\vee }\lambda _{j}(y)$

$=[\underset{i\in I}{\vee }\lambda _{i}(x)]\wedge \underset{j\in I}{\vee }%
[\lambda _{j}(y)]$

$=\underset{j\in I}{\vee }[[\underset{i\in I}{\vee }\lambda _{i}(x)]\wedge
\lambda _{j}(y)]$

$=\underset{j\in I}{\vee }[\underset{i\in I}{\vee }[\lambda _{i}(x)\wedge
\lambda _{j}(y)]]$

$\leq \underset{j\in I}{\vee }[\underset{i\in I}{\vee }[\lambda
_{i}^{j}(x)\wedge \lambda _{i}^{j}(y)]]$ where $\lambda _{i}^{j}\in
\{\lambda _{i}:i\in I\}$

$\leq \underset{j\in I}{\vee }[\underset{i\in I}{\vee }[\underset{z\in
x\oplus y}{\inf }\lambda _{i}^{j}(z)]]$

$\leq \underset{i,j\in I}{\vee }[\underset{z\in x\oplus y}{\inf }\lambda
_{i}^{j}(z)]$

$\leq \underset{i\in I}{\vee }[\underset{z\in x\oplus y}{\inf }\lambda
_{i}^{j}(z)]$

$=\underset{i\in I}{\vee }\underset{z\in x\oplus y}{\inf }\lambda
_{i}^{j}(z).$

Thus $\underset{i\in I}{\vee }$ is a fuzzy hyperideal of $R.$ Clearly $%
\lambda \leq \underset{i\in I}{\vee }\lambda _{i}$ and $\underset{i\in I}{%
\vee }\lambda _{i}(a)=\underset{i\in I}{\vee }\lambda _{i}(a)=\alpha .$ Thus 
$\underset{i\in I}{\vee }\lambda _{i}$ is the l.u.b of $\tau .$

Hence, by Zorn's lemma, there exists a fuzzy hyperideal $\eta $ of $R$ which
is maximal with respect to the property that $\lambda \leq \eta $ and $\eta
(a)=\alpha .$ We now show that $\eta $ is a fuzzy irreducible hyperideal of $%
R.$ Suppose $\eta =\delta _{1}\wedge \delta _{2},$ where $\delta _{1}$ and $%
\delta _{2}$ are fuzzy hyperideal of $R.$ Since $R$ is assumed to be a fully
idempotent semihyperring, so by Theorem \ref{THM1}, the set of fuzzy
hyperideals of $R$ (ordered by $\leq $ ) is a distributive lattice under the
sum and intersection of fuzzy hyperideals. Hence $\eta =\delta _{1}\wedge
\delta _{2}=g.l.b.\{\delta _{1},\delta _{2}\}.$ This implies that $\eta \leq
\delta _{1}$ and $\eta \leq \delta _{2}.$ We claim that either $\eta =\delta
_{1}$ or $\eta =\delta _{2}.$ Suppose, on the contrary, $\eta \neq \delta
_{1}$ and $\eta \neq \delta _{2},$ it follows that $\delta _{1}(a)\neq
\alpha $ and $\delta _{2}(a)\neq \alpha $. Hence $\alpha =\eta (a)=(\delta
_{1}\wedge \delta _{2})(a)=\{\delta _{1}(a)\wedge \delta _{2}(a)\}\neq
\alpha ,$ which is contradiction. Hence either $\eta =\delta _{1}$ or $\eta
=\delta _{2}.$This proves that $\eta $ is a fuzzy irreducible hyperideal.
Hence by Theorem\ref{THR3}, $\eta $ is a fuzzy prime hyperideal.
\end{proof}

Now, we have to prove the main characterization theorem for fully idempotent
semihyperrings.

\begin{theorem}
The following conditions for a semihyperring $R$ are equivalent:
\end{theorem}

(1) $R$ is fully idempotent,

(2) The lattice of all fuzzy hyperideals of $R$ (ordered by $\leq $ ) is a
distributive lattice under the sum and intersection of fuzzy hyperideals
with $\lambda \wedge \mu =\lambda \mu ,$ for each pair of fuzzy hyperideals $%
\lambda ,\mu $ of $R.$

(3) Each fuzzy hyperideal is the intersection of those fuzzy prime
hyperideals of $R$ which contain it. If, in addition, $R$ is assumed to be
commutative, then the above conditions are equivalent to:

(4) $R$ is regular.

\begin{proof}
(1)$\Rightarrow $(2). This is Theorem \ref{THM1}

(2)$\Rightarrow $(3). Let $\lambda $ be a fuzzy hyperideal of $R$ and let $%
\{\lambda _{s}:s\in \Omega \}$ be the family of all fuzzy prime hyperideals
of $R$ which contain $\lambda .$ Obviously, $\lambda \leq \wedge _{s\in
\Omega }\lambda _{s}.$ We now prove that $\wedge _{s\in \Omega }\lambda
_{s}\leq \lambda .$ Let $a$ be any element of $R.$ By Lemma \ref{LEMMA1},
there exists a fuzzy prime hyperideal $\lambda _{t}$ $($say$)$ such that $%
\lambda \leq \lambda _{t}$ and $\lambda (a)=\lambda _{t}(a).$ Thus $\lambda
_{t}\in \{\lambda _{s}:s\in \Omega \}.$ Hence $\wedge _{s\in \Omega }\lambda
_{s}\leq \lambda _{t},$ so $\wedge _{s\in \Omega }\lambda _{s}(a)\leq
\lambda _{t}(a)=\lambda (a).$ This implies that $\wedge _{s\in \Omega
}\lambda _{s}\leq \lambda ,$ so $\wedge _{s\in \Omega }\lambda _{s}=\lambda
. $

(3)$\Rightarrow $(1). Let $\lambda $ be any fuzzy hyperideal of $R$. Then $%
\lambda ^{2}$ is also a fuzzy hyperideal of $R.$ Hence, according to
statement (3), $\lambda ^{2}$ can be written as $\lambda ^{2}=\wedge _{s\in
\Omega }\lambda _{s},$ where $\{\lambda _{s}:s\in \Omega \}$ is the family
of all fuzzy prime hyperideals of $R$ which contains $\lambda ^{2}.$ Now $%
\lambda ^{2}\leq \lambda $ is always true. Hence, $\lambda ^{2}=\lambda .$
Therefore, $R$ is fully idempotent. Finally, if $R$ is assumed to be
commutaive, then as noted in Theorem \ref{THR 2.1}, (1)$\Leftrightarrow $%
(4). This completes the proof of the theorem.
\end{proof}

At the end of this section, we prove the following fuzzy theoretic
characterization of regular semihyperring. First we recall the following
definition.

\begin{definition}
Let $\lambda $ and $\mu $ be fuzzy subsets of a semihyperring $R.$ Then the
fuzzy subset $\lambda \circ \mu $ is defined by $(\lambda \circ \mu )(x)=%
\underset{x=yz}{\vee }[(\lambda (y)\wedge \mu (z))],$ for all $x\in R.$
\end{definition}

\begin{theorem}
The following conditions for a semihyperring $R$ are equivalent:
\end{theorem}

(1) $R$ is regular,

(2) For any right hyperideal of $R$ and any left hyperideal $L$ of $R,$ $%
R\cap L=RL,$

(3) For any fuzzy right hyperideal $\lambda $ and any fuzzy left hyperideal $%
\mu $ of $R,$

$\lambda \wedge \mu =\lambda \circ \mu .$

\begin{proof}
For (1)$\Rightarrow $(2), we refer to Golan [4, proposition 5.27, p.63]. So
we have to prove only (1)$\Rightarrow $(3). Suppose that $R$ is regular. Let 
$\delta $ be any fuzzy right hyperideal and $\mu $ any fuzzy left hyperideal
of $R.$ We show that $\lambda \wedge \mu =\lambda \circ \mu .$ Let $x\in R.$
Then

$(\lambda \circ \mu )(x)=\underset{x=yz}{\vee }[\lambda (y)\wedge \mu (z)]$

$\leq \underset{x=yz}{\vee }[\lambda (yz)\wedge \mu (yz)]=\underset{x=yz}{%
\vee }[\lambda (x)\wedge \mu (x)]$

$=\underset{x=yz}{\vee }[\lambda (x)\wedge \mu (x)]=(\lambda \wedge \mu
)(x). $

Thus $(\lambda \circ \mu )\leq (\lambda \wedge \mu ).$ This does not depend
upon the hypothesis. We now show that $(\lambda \wedge \mu )\leq (\lambda
\circ \mu ).$ Let $x\in R.$\ Since $R$ is von Neumann regular, there exists $%
a\in R$ such that $x=xax.$ Thus

$(\lambda \wedge \mu )(x)=(\lambda (x)\wedge \mu (x))\leq (\lambda
(xa)\wedge \mu (x)\leq \underset{x=yz}{\vee }(\lambda (y)\wedge \mu
(z))=(\lambda \circ \mu )(x).$

Hence $\lambda \circ \mu =\lambda \wedge \mu .$ Conversely, assume that $%
\lambda \wedge \mu =\lambda \circ \mu $ for any fuzzy right hyperideal $%
\lambda $ and any left hyperideal $\mu $ of $R.$ We show that $R$ is
regular. Let $x\in R$. $xR$ and $Rx$ are the principal right and left
hyperideals of $R,$ respectively, which are generated by $x.$ Thus, if $%
\delta _{xR}$ and $\delta _{Rx}$ denote, respectively, the characteristic
functions of $xR$ and $Rx,$ then clearly $\delta _{xR}$ and $\delta _{Rx}$
are fuzzy right and left hyperideals of $R.$ Hence, by the assumption $%
\delta _{xR}\wedge \delta _{Rx}=\delta _{xR}\circ \delta _{Rx}.$ This
implies that $xR\cap Rx=xRRx.$ Thus $x\in xR\cap Rx=xRRx\subseteq xRx.$
Hence, there exists $x\in R$ such that $x=xax,$ thus showing that $R$ is
regular.
\end{proof}

\section{\protect\huge Fuzzy prime spectrum of a fully idempotent
semihyperring}

In this section $R$ will denote a fully idempotent semihyperring, $FI_{R}$
will denote the lattice of fuzzy hyperideals of $R,$ and $FP_{R}$ the set of
all proper fuzzy prime hyperideals of $R.$ For any fuzzy hyperideal $\lambda 
$ of $R,$ we define $O_{\lambda }=\{\mu \in FP_{R}:\lambda \nleq \mu \}$ and 
$\tau (FP_{R})=\{O_{\lambda }:\lambda \in FI_{R}\}.$ A fuzzy hyperideal $%
\lambda $ of $R$ is called proper if $\lambda \neq A,$ where the fuzzy
hyperideal $A$ of $R$ is defined by $A(x)=1,$ for all $x\in R.$ We prove the
following.

\begin{theorem}
The set $\tau (FP_{R})$ forms a topology on the set $FP_{R}.$ The assignment 
$\lambda \longmapsto O_{\lambda }$ is an isomorphism between the lattice $%
FI_{R}$ of fuzzy hyperideals of $R$ and the lattice of open subsets of $%
FP_{R}.$
\end{theorem}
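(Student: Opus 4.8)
The plan is to verify directly that $\tau(FP_R)=\{O_\lambda:\lambda\in FI_R\}$ satisfies the axioms of a topology, and then to show that $\lambda\mapsto O_\lambda$ is a lattice isomorphism onto the open sets. For the topology axioms, I would first record the three basic identities: $O_\theta=\varnothing$ where $\theta$ is the zero fuzzy hyperideal (every proper fuzzy prime hyperideal contains $\theta$, so $\theta\leq\mu$ always), and $O_A=FP_R$ where $A$ is the fuzzy hyperideal with $A(x)=1$ for all $x$ (since every $\mu\in FP_R$ is proper, $A\nleq\mu$). Next, for an arbitrary family $\{\lambda_s\}_{s\in\Omega}$ of fuzzy hyperideals I claim $\bigcup_{s}O_{\lambda_s}=O_{\eta}$ where $\eta$ is the fuzzy hyperideal generated by $\bigvee_s\lambda_s$ (equivalently one checks that $\bigvee_s\lambda_s$ is already a fuzzy hyperideal, using the argument of Lemma~\ref{LEMMA1} that a directed — here arbitrary — join of fuzzy hyperideals is again a fuzzy hyperideal): indeed $\mu\in\bigcup_s O_{\lambda_s}$ iff some $\lambda_s\nleq\mu$ iff $\bigvee_s\lambda_s\nleq\mu$. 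Finally, for finite intersections it suffices to treat two hyperideals and show $O_\lambda\cap O_\mu=O_{\lambda\wedge\mu}$; the inclusion $\subseteq$ is immediate since $\lambda\wedge\mu\leq\lambda,\mu$, and for $\supseteq$ one uses primeness: if $\lambda\wedge\mu\nleq\mu_0$ for a fuzzy prime $\mu_0$, then since $R$ is fully idempotent $\lambda\wedge\mu=\lambda\mu$ by Theorem~\ref{THR 2.1}(3), so $\lambda\mu\nleq\mu_0$, and primeness of $\mu_0$ forces $\lambda\nleq\mu_0$ and $\mu\nleq\mu_0$, i.e. $\mu_0\in O_\lambda\cap O_\mu$.

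For the second assertion, I would first establish that the map $\lambda\mapsto O_\lambda$ is order-preserving in both directions, i.e. $\lambda\leq\mu\iff O_\lambda\subseteq O_\mu$. The forward implication is trivial. For the converse, suppose $\lambda\nleq\mu$; then there is $a\in R$ with $\lambda(a)>\mu(a)$, and by Lemma~\ref{LEMMA1} applied to $\mu$ there is a fuzzy prime hyperideal $\eta$ with $\mu\leq\eta$ and $\eta(a)=\mu(a)$; this $\eta$ is proper (since $\eta(a)=\mu(a)<1$), so $\eta\in FP_R$, and $\lambda(a)>\mu(a)=\eta(a)$ gives $\lambda\nleq\eta$, i.e. $\eta\in O_\lambda\setminus O_\mu$, so $O_\lambda\not\subseteq O_\mu$. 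Hence the map is an order-embedding; combined with the identities $O_{\lambda\wedge\mu}=O_\lambda\cap O_\mu$ and $O_{\lambda\oplus\mu}=O_\lambda\cup O_\mu$ (the latter because $\lambda\oplus\mu$ is the join of $\lambda$ and $\mu$ in $FI_R$, so $O_{\lambda\oplus\mu}=O_{\lambda\vee\mu}=O_\lambda\cup O_\mu$ by the arbitrary-union identity), it is a lattice homomorphism. It is clearly onto $\tau(FP_R)$ by definition, and injectivity follows from the order-embedding property. Thus $\lambda\mapsto O_\lambda$ is a lattice isomorphism from $FI_R$ onto $\tau(FP_R)$, which is precisely the lattice of open subsets of $FP_R$.

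The main obstacle I anticipate is the key nonformal step: proving $O_\lambda\cap O_\mu=O_{\lambda\wedge\mu}$ and the order-embedding property, both of which genuinely use the hypothesis that $R$ is fully idempotent — the first through the identity $\lambda\wedge\mu=\lambda\mu$ (Theorem~\ref{THR 2.1}) together with the definition of fuzzy primeness, and the second through the existence result of Lemma~\ref{LEMMA1}, whose proof in turn relied on the distributivity of $FI_R$ (Theorem~\ref{THM1}) and Zorn's lemma. One subtlety worth flagging in the write-up: for the arbitrary-union axiom one should confirm that $\bigvee_{s\in\Omega}\lambda_s$ is a fuzzy hyperideal even when $\Omega$ is not directed; the computation in Lemma~\ref{LEMMA1} uses the directedness only to pass to a common $\lambda_i^j$ in the family, so for a general family one instead takes the fuzzy hyperideal generated by $\bigvee_s\lambda_s$ and notes that it has the same set of fuzzy primes above it (equivalently, $\nleq\mu_0$ for the generated hyperideal iff $\nleq\mu_0$ for some $\lambda_s$), which suffices for the union identity.
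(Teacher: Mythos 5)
Your proposal follows the same overall architecture as the paper's proof: the identities $O_{\varphi}=\varnothing$ and $O_{A}=FP_{R}$, closure under finite intersection via $\lambda \wedge \mu =\lambda \mu$ (Theorem \ref{THR 2.1}) together with fuzzy primeness, closure under arbitrary unions via the join in $FI_{R}$, and injectivity of $\lambda \mapsto O_{\lambda}$ via Lemma \ref{LEMMA1}. Two remarks. First, your treatment of arbitrary unions is genuinely cleaner than the paper's: the paper works directly with the sum $\Sigma _{i\in I}\eta _{i}$ and argues by inspecting the supremum over decompositions $x\in x_{1}\oplus x_{2}\oplus \cdots$ (an argument that quietly assumes $\eta _{i}(0)=1$ and manipulates infinite expressions rather loosely), whereas your chain ``$\mu _{0}\in \bigcup _{s}O_{\lambda _{s}}$ iff some $\lambda _{s}\nleq \mu _{0}$ iff $\bigvee _{s}\lambda _{s}\nleq \mu _{0}$ iff $\eta \nleq \mu _{0}$ for $\eta$ the generated hyperideal'' avoids all of that; your flag about non-directed families is also a point the paper glosses over. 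Second, there is a crossed pair of justifications in your finite-intersection step that you must repair: the inclusion $O_{\lambda \wedge \mu }\subseteq O_{\lambda }\cap O_{\mu }$ is the one that is immediate from $\lambda \wedge \mu \leq \lambda ,\mu$ (if $\lambda \leq \mu _{0}$ then $\lambda \wedge \mu \leq \mu _{0}$), and it is the reverse inclusion $O_{\lambda }\cap O_{\mu }\subseteq O_{\lambda \wedge \mu }$ that genuinely needs primeness: from $\lambda \nleq \mu _{0}$ and $\mu \nleq \mu _{0}$ one cannot conclude $\lambda \wedge \mu \nleq \mu _{0}$ pointwise (the witnessing points may differ), and the correct argument is the contrapositive of primeness applied to $\lambda \mu =\lambda \wedge \mu$. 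As written, your ``immediate'' direction is the one that would fail without primeness, and your primeness invocation is attached to the direction that does not need it; since all the ingredients appear in your paragraph, this is a labelling error rather than a missing idea, but the write-up should swap the two justifications.
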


\begin{proof}
First we show that $\tau (FP_{R})$ forms a topology on the set $FP_{R}.$
Note that $O_{\varphi }=\{\mu \in FP_{R}:\varphi \nleq \mu \}=\varphi ,$
where $\varphi $ is the usual empty set and $\varphi $ is the fuzzy zero
hyperideal of $R$ defined by $\varphi (a)=0$ for all $a\in R.$ This follows
since $\varphi $ is contained in every fuzzy (prime) hyperideal of $R.$ Thus 
$O_{\varphi }$ is the empty subset of $\tau (FP_{R}).$ On the other hand, we
have $O_{A}=\{\mu \in FP_{R}:A\nleq \mu \}=FP_{R}.$ This is true, since $%
FP_{R}$ is the set of proper fuzzy prime hyperideals of $R.$ So $%
O_{A}=FP_{R} $ is an element of $\tau (FP_{R}).$ Now, let $O_{\delta
_{1}},O_{\delta _{2}}\in FP_{R}$ with $\delta _{1}$ and $\delta _{2}$ in $%
FI_{R}.$ Then $O_{\delta _{1}}\cap O_{\delta _{2}}=\{\mu \in FP_{R}:\delta
_{1}\nleq \mu $ and $\delta _{2}\nleq \mu \}.$ Since $R$ is fully
idempotent, therefore, $\delta _{1}\delta _{2}=\delta _{1}\wedge \delta
_{2}, $ by \ref{THR 2.1}. Since $\mu $ is fuzzy prime, so $\delta _{1}\delta
_{2}\leq \mu $ implies that $\delta _{1}\leq \mu $ or $\delta _{2}\leq \mu .$
Hence, it follows that $\delta _{1}\delta _{2}\nleq \mu ,$ that is, $\delta
_{1}\wedge \delta _{2}\nleq \mu .$ Conversely, $\delta _{1}\wedge \delta
_{2}\nleq \mu ,$ obviously, implies that $\delta _{1}\nleq \mu $ and $\delta
_{2}\nleq \mu .$ Thus the statements $\delta _{1}\nleq \mu $ and $\delta
_{2}\nleq \mu ,$ and $\delta _{1}\wedge \delta _{2}\nleq \mu ,$ are
equivalent. Hence

$O_{\delta _{1}}\cap O_{\delta _{2}}=\{\mu \in FP_{R}:\delta _{1}\wedge
\delta _{2}\nleq \mu \}=O_{\delta _{1}\wedge \delta _{2}}.$

Let us now consider an arbitrary family

$\{\eta _{i}\}_{i\in I}$ of fuzzy hyperideals of $R.$ Since

$\underset{i\in I}{\cup }O_{\eta _{i}}=\underset{i\in I}{\cup }\{\mu \in
FP_{R}:\eta _{i}\nleq \mu \}=\{\mu \in FP_{R}:\exists $ $k\in I$ so that $%
\eta _{i}\nleq \mu \}.$

Note that

$(\underset{i\in I}{\Sigma }\eta _{i})(x)=\underset{x\in x_{1}\oplus
x_{2}\oplus x_{3}\oplus ...}{\vee }(\eta _{1}(x_{1})\wedge \eta
_{2}(x_{2})\wedge \eta _{3}(x_{3})\wedge ...)$

where only a finite number of the $x_{i}^{\prime }s$ are not $0.$ Thus,
since $\eta _{i}(0)=1,$ therefore, we are considering the infimum of a
finite number of terms, because the $1^{\prime }s$ are effectively not being
considered.

Now, if for some $k\in I,$ $\eta _{k}\nleq \mu ,$ then there exists $x\in R$
such that $\eta _{k}>\mu .$ Consider the particular factorization of $x$ for
which $x_{k}=x$ and $x_{i}=0$ for all $i\neq k.$ We see that $\eta _{k}(x)$
is an element of the set whose supremum is defined to be $(\underset{i\in I}{%
\Sigma }\eta _{i})(x).$ Thus, $(\underset{i\in I}{\Sigma }\eta _{i})(x)\geq
\eta _{k}(x)>\mu (x).$ Thus $(\underset{i\in I}{\Sigma }\eta _{i})(x)>\mu
(x).$ Hence, we have $\underset{i\in I}{\Sigma }\eta _{i}\nleq \mu .$

Hence, $\eta _{k}\nleq \mu $ for some $k\in I$ implies that $\underset{i\in I%
}{\Sigma }\eta _{i}\nleq \mu .$

Conversely, suppose that $\underset{i\in I}{\Sigma }\eta _{i}\nleq \mu .$
Therefore, there exists an element $x\in R$ such that $(O)(x)>\mu (x).$ This
means that $\underset{x\in x_{1}\oplus x_{2}\oplus x_{3}\oplus ...}{\vee }%
(\eta _{1}(x_{1})\wedge \eta _{2}(x_{2})\wedge \eta _{3}(x_{3})\wedge
...)>\mu (x).$

Now, if all the elements of the set, whose supremum we are taking, are
individually less than or equal to $\mu (x),$ then we have $(\underset{i\in I%
}{\Sigma }\eta _{i})(x)=\underset{x\in x_{1}\oplus x_{2}\oplus x_{3}\oplus
...}{\vee }(\eta _{1}(x_{1})\wedge \eta _{2}(x_{2})\wedge \eta
_{3}(x_{3})\wedge ...)\leq \mu (x)$

which does not agree with what we have assumed. Thus, there is at least one
element of the set (whose supremum we are taking), say, $\eta
_{1}(x_{1}^{\prime })\wedge \eta _{2}(x_{2}^{\prime })\wedge \eta
_{3}(x_{3}^{\prime })\wedge ...$which is greater than $\mu (x)(x\in
x_{1}^{\prime }\oplus x_{2}^{\prime }\oplus x_{3}^{\prime }\oplus ...$being
the corresponding breakup of $x,$ where only a finite number of the $%
x_{s}^{\prime }$'$s$ are nonzero$).$ Thus $\eta _{1}(x_{1}^{\prime })\wedge
\eta _{2}(x_{2}^{\prime })\wedge \eta _{3}(x_{3}^{\prime })\wedge ...>\mu
(x)\geq \mu (x_{1}^{\prime })\wedge \mu (x_{2}^{\prime })\wedge \mu
(x_{3}^{\prime })\wedge ...$

That is, $\eta _{1}(x_{1}^{\prime })\wedge \eta _{2}(x_{2}^{\prime })\wedge
\eta _{3}(x_{3}^{\prime })\wedge ...>\mu (x_{1}^{\prime })\wedge \mu
(x_{2}^{\prime })\wedge \mu (x_{3}^{\prime })\wedge ...$

That is, $\eta _{1}(x_{1}^{\prime })\wedge \eta _{2}(x_{2}^{\prime })\wedge
\eta _{3}(x_{3}^{\prime })\wedge ...>\mu (x_{p}^{\prime })$

where $\mu (x_{p}^{\prime })=\mu (x_{1}^{\prime })\wedge \mu (x_{2}^{\prime
})\wedge \mu (x_{3}^{\prime })\wedge ...(p\in I).$

Hence $\eta _{1}(x_{1}^{\prime })>\mu (x_{p}^{\prime }).$ It follows that $%
\eta _{p}\nleq \mu $ for some $p\in N.$ Hence, $\underset{i\in I}{\Sigma }%
\eta _{i}\nleq \mu \Rightarrow \eta _{p}\nleq \mu $ for some $p\in N.$
Hence, the two statements, that is,

(i) $\underset{i\in I}{\Sigma }\eta _{i}\nleq \mu ,$ and

(ii) $\eta _{p}\nleq \mu $ for some $p\in I$ are equivalent. Hence

$\underset{i\in I}{\cup }O_{\eta _{i}}=\underset{i\in I}{\vee }\{\mu \in
FP_{R}:\eta _{i}\nleq \mu \}$

$=\underset{i\in I}{\cup }\{\mu \in FP_{R}:\underset{i\in I}{\Sigma }\eta
_{i}\nleq \mu \}$

$=O_{\underset{i\in I}{\Sigma }\eta _{i}}$

because, $\underset{i\in I}{\Sigma }\eta _{i}$ is also a fuzzy hyperideal of 
$R.$ Thus, $\underset{i\in I}{\cup }O_{\eta _{i}}\in \tau (FP_{R}).$ Hence
it follows that $\tau (FP_{R})$ forms a topology on the set $FP_{R}.$ Let $%
\phi :FI_{R}\longrightarrow FP_{R}$ be the mapping defined by $\lambda
\longrightarrow O_{\lambda }.$ It follows from the above that the
prescription $\phi (\lambda )=O_{\lambda }$ preserves finite intersection
and arbitrary union. Thus $\phi $ is a lattice homomorphism. To conclude the
proof, we must show that $\phi $ is bijective. In fact, we need to prove the
equivalence $\delta _{1}=\delta _{2},$ if and only if $O_{\delta
_{1}}=O_{\delta _{2}},$ for $\delta _{1},$ $\delta _{2}$ in $L_{A}.$ Suppose
that $O_{\delta _{1}}=O_{\delta _{2}}.$ If $\delta _{1}\neq \delta _{2},$
then there exists $x\in R$ such that $\delta _{1}(x)\neq \delta _{2}(x).$
Thus, either $\delta _{1}(x)>\delta _{2}(x)$ or $\delta _{2}(x)>\delta
_{1}(x).$ Suppose that $\delta _{1}(x)>\delta _{2}(x).$ Using Lemma \ref%
{LEMMA1}, there exists a fuzzy prime hyperideal $\mu $ of $R$ such that $%
\delta _{2}\leq \mu $ and $\delta _{2}(x)=\mu (x).$ Hence, $\delta _{1}\nleq
\mu ,$ because $\delta _{1}(x)>\delta _{2}(x)=\mu (x).$ Therefore, $\delta
_{1}(x)>\mu (x).$ Thus, $\mu \in O_{\delta _{1}}$. Our assumption is that $%
O_{\delta _{1}}=O_{\delta _{2}}.$ Hence, we have $\mu \in O_{\delta _{2}}.$
Hence $\delta _{2}\nleq \mu .$ this is a contradiction. If, in the
beginning, we had assumed that $\delta _{2}(x)>\delta _{1}(x)$ then, again,
by similar reasoning, we get a contradiction. Thus, $O_{\delta
_{1}}=O_{\delta _{2}}$ implies that $\delta _{1}=\delta _{2}.$ Conversely,
if $\delta _{1}=\delta _{2}$, then, by definition, it is obvious that $%
O_{\delta _{1}}=O_{\delta _{2}}.$ Thus, we have proved that $\delta
_{1}=\delta _{2}$ if and only if $O_{\delta _{1}}=O_{\delta _{2}}$ for $%
\delta _{1}$and $\delta _{2}$ in $L_{A}.$ This completes the proof of the
theorem.

The set $FP_{R}$ will be called the fuzzy prime spectrum of $R$ and the
topology $\tau (FP_{R})$ constructed in the above theorem will be called the
fuzzy spectral topology on $FP_{R}.$ The associated topological space will
be called the fuzzy spectral space of $R.$
\end{proof}

\end{document}